\newtheorem{remark}[theorem]{{\itshape Remark}}
\newcommand{\Pb}{\mbox{\rm (P)}\xspace}
\newcommand{\Pbe}{\mbox{\rm (P$_{\epsilon}$)}\xspace}
\newcommand{\mo}{{\mathcal{M}(\omega)}}
\newcommand{\mon}{{\mathcal{M}(\omega)^n}}
\newcommand{\BV}{{BV(\omega)}}
\newcommand{\C}{{C_0(\omega)}}
\newcommand{\Cn}{{C_0(\omega)^n}}
\renewcommand{\div}{\operatorname{\text{\rm div}\!}}
 \title{Analysis of Optimal Control Problems of Semilinear Elliptic Equations by BV-Functions
\thanks{The first author was supported by Spanish Ministerio de Econom\'{\i}a y Competitividad under project MTM2014-57531-P. The second was supported by the ERC advanced grant 668998 (OCLOC) under the EU’s H2020 research program.}}
\author{Eduardo Casas\thanks{Departamento de Matem\'{a}tica Aplicada y Ciencias de la Computaci\'{o}n, E.T.S.I. Industriales y de Telecomunicaci\'on, Universidad de Cantabria, 39005 Santander, Spain (eduardo.casas@unican.es).}
    \and Karl Kunisch\thanks{Institute for Mathematics and Scientific Computing, University of Graz, Heinrichstrasse 36, A-8010 Graz, Austria (karl.kunisch@uni-graz.at).}}
\begin{document}


\maketitle

\begin{abstract}
Optimal control problems for semilinear elliptic equations with control costs in the space of bounded variations are analysed. BV-based optimal controls favor  piecewise constant, and hence 'simple' controls,  with few  jumps.  Existence of optimal controls, necessary and sufficient optimality conditions of first and second order are analysed. Special attention is paid on the effect of the choice of the vector norm in the definition of the BV-seminorm for the optimal primal and adjoined variables.
\end{abstract}

\begin{AMS}
35J61, 
49J20, 
49J52, 
49K20, 
\end{AMS}

\begin{keywords}
optimal control, bounded variation functions, sparsity, first and second order optimality conditions, semilinear elliptic equations
\end{keywords}

\pagestyle{myheadings} \thispagestyle{plain} \markboth{E6.~CASAS AND K.~KUNISCH}{Optimal Control by BV-Functions}


\section{Introduction}
\label{S1}
This paper is dedicated to the study of the optimal control problem
\[
   \Pb \ \ \ \min_{u \in \BV} J(u) = \frac{1}{2}\int_\Omega|y - y_d|^2\, dx + \alpha\int_\omega|\nabla u| + \frac{\beta}{2}\Big(\int_\omega u(x)\, dx\Big)^2 + \frac{\gamma}{2}\int_\omega u^2(x)\, dx,
\]
where $y$ is the unique solution to the Dirichlet problem
\begin{equation}
\left\{\begin{array}{rccl}-\Delta y + f(x,y) & = & u\chi_\omega & \text{in } \Omega,\\y & = & 0 & \text{on } \Gamma.\end{array}\right.
\label{E1.1}
\end{equation}
The control domain $\omega$ is an open subset of $\Omega$. We assume that $\alpha > 0$, $\beta \ge 0$, $\gamma \ge 0$, $y_d \in L^2(\Omega)$, and $\Omega$ is a bounded domain in $\mathbb{R}^n$, $n = 2$ or $3$, with Lipschitz boundary $\Gamma$. Additionally we make the following hypothesis:
\begin{equation}
\text{if } n = 3, \ \text{ then } \ \gamma > 0 \ \text{ is assumed.}
\label{E1.2}
\end{equation}

Here, $\BV$ denotes the space of functions of bounded variation in $\omega$ and $\int_\omega|\nabla u|$ stands for the total variation of $u$. The assumptions on the nonlinear term $f(x,y)$ in the state equation will be formulated later. By introducing the penalty term involving the mean of $u$ when $\beta>0$ we realize the fact that constants functions constitute the kernel of the BV-seminorm. {If $\gamma = 0$,} in dependence on the order of the nonlinearity $f$ it can be necessary to choose $\beta>0$ to guarantee that \Pb admits a solution.

The use of the BV-seminorm in \Pb  enhances that the optimal controls are piecewise constant in space. Thus the cost functional in  \Pb models the objective of simultaneously determining   a control  of simple structure and  resulting  in a state $y=y(u)$ which is as close to  $y_d$ as possible. Comparing with the common formulation of using $L^2(\omega)$ or $L^p(\omega)$ control-cost functionals, with $p>2$ to match the nonlinearity $f$, these later functionals will produce smooth optimal controls which may be more intricate to realize in practice than controls which result from the $BV-$formulation. Piecewise constant behavior of the optimal  controls can also be obtained by introducing bilateral bounds $  \underline a\le   u(x) \le \bar b$ together with only the tracking term in \Pb.  In this case we can expect optimal controls which exhibit bang-bang structure. If an $L^1(\omega)$ control cost term is added then the optimal control will be of the form bang-zero-bang.  But this type of behavior is distinctly different from that which is allowed in \Pb, since the value of the piecewise constants plateaus is not prescribed. This is distinctly different from the bilaterally constraint case where the optimal control typically assumes one of the extreme values $\underline a$ or $\bar b$. This in turn can lead to unnecessarily high control costs.

Possibly one of the first papers where this was pointed out, but not systematically investigated is \cite{ClasonKunisch2010}. In \cite{CKK2016} semilinear parabolic equations with temporally dependent BV-functions as controls were investigated.  Thus we were focusing on controls which are  optimally switching in time. The analysis for this case is simpler and exploits specific properties of  BV-functions in dimension one. Numerically the simple structure of the controls which is obtained for BV-constrained control problems was already demonstrated in \cite{CCK12, CKK2016} and a recent master thesis \cite{DH17}.  BV-seminorm control costs are also employed in \cite{CasasKogutLeugering}, where the control appears as coefficient in the $p$-Laplace equation. Beyond these papers the choice of the control costs related to BV-norms or BV-seminorms has not received much attention in the optimal control literature yet.

In mathematical image analysis, to the contrary, the BV-seminorm has received a tremendous amount of attention. The beginning of this activity is frequently dated to \cite{ROF1991}. Let us also mention the recent paper \cite{BrediesHoller} which gives interesting insight into the structure of the subdifferential of the BV-seminorm. Fine properties of BV-functions, in the context of image reconstruction problems, in particular the  stair casing effect were, analyzed for the one-dimensional case in  \cite{Ring2000}, and in higher dimensions in \cite{J16,CDP17}, for example. In \cite{CKP98} the authors provided a convergence analysis for BV-regularized mathematical imaging problems by finite elements, paying special attention to the choice of the vector norm in the definition of the BV-seminorm.

Let us also compare the use of the BV-term in \Pb with the efforts that have been made for studying optimal control problems with sparsity constraints. These formulations involve either measure-valued norms of the control or $L^1$-functionals combined with pointwise constraints on the control. We cite \cite{CCK12,CHW12}  from among the many results which are now already available.  The BV-seminorm therefore can also be understood as a sparsity constraint for the first derivative.

Let us briefly describe the structure of the paper. Section 2 contains an analysis of the state equation and the smooth part of the cost-functional.  The non-smooth part of the cost-functional is investigated in Section 3. Special attention is given to the consequences which arise from the specific choice which is made for the vector norm in the variational definition of the BV-seminorm. In particular, we consider the Euclidean and the infinity norms. Existence of optimal solutions and first order optimality conditions are obtained in Section 4. Second order sufficient optimality conditions are provided in Section 5. Finally in Section 6 we consider \Pb with an additional $H^1(\omega)$ regularisation term and investigate the asymptotic behavior as the weight of the $H^1(\omega)$ regularisation tends to 0.

\section{Analysis of the state equation and the cost functional}
\label{S2}\setcounter{equation}{0}

We recall that a function $u \in L^1(\omega)$ is a function of bounded variation if its distributional derivatives $\partial_{x_i}u$, $1 \le i \le n$, belong to the Banach space of real and regular Borel measures $\mo$. Given a measure $\mu \in \mo$, its norm is given by
\[
\|\mu\|_\mo = \sup\{\int_\omega z\, d\mu : z \in \C \mbox{ and } \|z\|_\C \le 1\} = |\mu|(\omega),
\]
where $\C$ denotes the Banach space of continuous functions $z:\bar\omega \longrightarrow \mathbb{R}$ such that $z  = 0$ on $\partial\omega$, and $|\mu|$ is the total variation measure associated with $\mu$. On the product space $\mon$ we define the norm
\[
\|\mu\|_\mon = \sup\{\int_\omega z\, d\mu : z \in \Cn \mbox{ and } |z(x)| \le 1\ \ \forall x \in \omega\},
\]
where $|\cdot |$ is a norm in $\mathbb{R}^n$.

On $\BV$ we consider the usual norm
\[
\|u\|_\BV = \|u\|_{L^1(\omega)} + \|\nabla u\|_\mon,
\]
that makes $\BV$ a Banach space; see \cite[Chapter~3]{AFP2000} or \cite[Chapter~1]{Giusti84} for details. We recall that the total variation of $u$ is given by
\[
\|\nabla u\|_\mon = \sup\{\int_\omega \div z\, u \, dx : z \in C_0^\infty(\omega)^n \mbox{ and } |z(x)| \le 1\ \ \forall x \in \omega\}.
\]
We also use the notation
\[
\int_\omega |\nabla u| = \|\nabla u\|_\mon,
\]
as already employed in \Pb. For these topologies $\nabla:\BV \longrightarrow \mon$ is a linear continuous mapping.

In the sequel we will denote
\[
a_u = \frac{1}{|\omega|}\int_\omega u(x)\, dx \ \mbox{ and }\ \hat{u}  = u - a_u \ \mbox{ for every } u \in \BV.
\]
By using \cite[Theorem~3.44]{AFP2000} it is easy to deduce that there exists a constant $C_\omega$ such that
\begin{equation}
\|u\| := |a_u| + \|\nabla u\|_\mon \le \max\big(1,\frac{1}{|\omega|}\big)\|u\|_\BV \le C_\omega\|u\|.
\label{E2.1}
\end{equation}
In addition, we mention that $\BV$ is the dual space of a separable Banach space. Therefore every bounded sequence $\{u_k\}_{k = 1}^\infty$ in $\BV$ has a subsequence converging weakly$^*$ to some $u \in \BV$. The weak$^*$ convergence $u_k \stackrel{*}{\rightharpoonup} u$ implies that $u_k \to u$ strongly in $L^1(\omega)$ and $\nabla u_k \stackrel{*}{\rightharpoonup} \nabla u$ in $\mon$; see \cite[pages~124-125]{AFP2000}. We will also use that $\BV$ is continuously embedded in $L^p(\omega)$ with $1 \le p \le \frac{n}{n-1}$, and compactly embedded in $L^p(0,T)$ for every $p < \frac{n}{n - 1}$; see \cite[Corollary~3.49]{AFP2000}. From this property we deduce that the convergence  $u_k \stackrel{*}{\rightharpoonup} u$ in $\BV$ implies that $u_k \to u$ strongly in every $L^p(0,T)$ for all $p < \frac{n}{n - 1}$.

We make the following assumption on the nonlinear term of the state equation. We assume that $f:\Omega \times \mathbb{R} \longrightarrow \mathbb{R}$ is a Borel function, of class $C^2$ with respect to the last variable, and satisfies for almost all $x \in \Omega$
\begin{align}
&f(\cdot,0) \in L^{\hat p}(\Omega)\ \text{ with } \hat p > \frac{n}{2}, \label{E2.2}\\
&\frac{\partial f}{\partial y}(x,y) \ge 0 \quad \forall y \in \mathbb{R},\label{E2.3}\\
&\forall M > 0\, \exists C_M : \left|\frac{\partial f}{\partial y}(x,y)\right| + \left|\frac{\partial^2f}{\partial y^2}(x,y)\right|  \le C_M \quad \forall |y| \le M,\label{E2.4}\\
&\left\{\begin{array}{l}\displaystyle\forall M > 0 \mbox{ and }\forall \rho > 0\, \exists \varepsilon > 0 \mbox{ such that }\\
\displaystyle\left|\frac{\partial^2f}{\partial y^2}(x,y_2) - \frac{\partial^2f}{\partial y^2}(x,y_1) \right| \le \rho \mbox{ if } |y_2 - y_1| < \varepsilon \mbox{ and } |y_1|, |y_2| \le M.\end{array}\right.
\label{E2.5}
\end{align}

Let us observe that if $f$ is an affine function, $f(x,y) = c_0(x)y + d_0(x)$, then \eqref{E2.2}-\eqref{E2.5} hold if $c_0 \ge 0$ in $\Omega$, $c_0 \in L^\infty(\Omega)$, and $d_0 \in L^{\hat{p}}(\Omega)$.

By using these assumptions, the following theorem can be proved in a standard way; see, for instance, \cite[\S 4.2.4]{Troltzsch2010}. For the H\"{o}lder continuity result, the reader is referred to \cite[Theorem 8.29]{Gilbarg-Trudinger83}.
\begin{proposition}
For every $u \in L^{\hat p}(\omega)$ the state equation \eqref{E1.1} has a unique solution $y_u \in C^\sigma(\bar\Omega) \cap H_0^1(\Omega)$ for some $\sigma \in (0,1)$. In addition, for every $M > 0$ there exists a constant $K_M$ such that
\begin{equation}
\|y_u\|_{C^\sigma(\bar\Omega)} + \|y_u\|_{H_0^1(\Omega)} \le K_M\ \ \forall u \in L^{\hat p}(\omega) : \|u\|_{L^{\hat p}(\omega)} \le M.
\label{E2.6}
\end{equation}
\label{P2.1}
\end{proposition}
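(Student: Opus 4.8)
The plan is to dispose of uniqueness first, which is immediate, and then to treat existence together with the two a priori bounds, which form the core of the argument. For uniqueness, suppose $y_1$ and $y_2$ both solve \eqref{E1.1} for the same $u$; subtracting the equations and testing with $y_1 - y_2$ gives
\[
\int_\Omega |\nabla(y_1 - y_2)|^2\, dx + \int_\Omega \big(f(x,y_1) - f(x,y_2)\big)(y_1 - y_2)\, dx = 0,
\]
and since \eqref{E2.3} makes the second integrand nonnegative pointwise, both terms vanish and $y_1 = y_2$. For existence I would regularize, replacing $f(x,\cdot)$ by its truncation $f_k(x,y) = f(x,\mathrm{proj}_{[-k,k]}(y))$, which is bounded and globally Lipschitz in $y$ while retaining both the sign condition \eqref{E2.3} and $f_k(x,0)=f(x,0)$. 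Each truncated equation is solvable by monotone operator theory (Browder--Minty), since $y \mapsto -\Delta y + f_k(x,y)$ is then a bounded, continuous, coercive, monotone operator from $H_0^1(\Omega)$ into $H^{-1}(\Omega)$. The aim is to show that the resulting solutions $y_k$ are bounded in $L^\infty(\Omega)$ uniformly in $k$, so that for $k$ beyond that bound the truncation is inactive and $y_k$ solves the original problem.

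Throughout, note that $u\chi_\omega \in L^{\hat p}(\Omega)$ with $\hat p > n/2$ (extending $u$ by zero), and after moving $f(\cdot,0)$ to the right-hand side the data lie in $L^{\hat p}(\Omega)$. The $H_0^1$ estimate comes from testing the truncated equation with $y_k$ and writing $\int_\Omega f_k(x,y_k) y_k\,dx = \int_\Omega [f_k(x,y_k) - f(x,0)] y_k\,dx + \int_\Omega f(x,0) y_k\,dx$; the first integral is nonnegative by monotonicity and is discarded, leaving $\int_\Omega |\nabla y_k|^2\,dx \le \int_\Omega (u\chi_\omega - f(\cdot,0)) y_k\,dx$. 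Because $\hat p > n/2$ guarantees the dual integrability needed against the Sobolev exponent $2^* = 2n/(n-2)$, Hölder's inequality, the embedding $H_0^1(\Omega) \hookrightarrow L^{2^*}(\Omega)$, and Poincaré's inequality bound the right-hand side by $C(M + \|f(\cdot,0)\|_{L^{\hat p}(\Omega)})\,\|y_k\|_{H_0^1(\Omega)}$, yielding the $H_0^1$ part of \eqref{E2.6} uniformly in $k$ and depending only on $M$.

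The $L^\infty$ bound I would obtain by Stampacchia's truncation method: testing with $(|y_k| - t)^+\,\mathrm{sign}(y_k)$ and again dropping the nonlinear contribution by the sign condition, one estimates the energy and measure on the super-level sets $\{|y_k| > t\}$, and the assumption $\hat p > n/2$ is exactly what makes the resulting iterative inequality close and force $\|y_k\|_{L^\infty(\Omega)} \le C(M + \|f(\cdot,0)\|_{L^{\hat p}(\Omega)})$, uniformly in $k$. This simultaneously removes the truncation and supplies the $L^\infty$ bound. Finally, with $\|y_u\|_{L^\infty(\Omega)}$ controlled, \eqref{E2.4} shows $f(\cdot,y_u)$ is bounded, so $-\Delta y_u = u\chi_\omega - f(\cdot,y_u) \in L^{\hat p}(\Omega)$; the global Hölder estimate \cite[Theorem 8.29]{Gilbarg-Trudinger83}, valid for the Lipschitz domain $\Omega$ with homogeneous Dirichlet data, then gives $y_u \in C^\sigma(\bar\Omega)$ for some $\sigma \in (0,1)$ with $\|y_u\|_{C^\sigma(\bar\Omega)}$ bounded in terms of $M$, completing \eqref{E2.6}.

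I expect the main obstacle to be the $L^\infty$ estimate and its uniformity in the truncation level: ensuring that the Stampacchia iteration closes at the borderline integrability $\hat p > n/2$, and that the Hölder constant furnished by \cite[Theorem 8.29]{Gilbarg-Trudinger83} is uniform up to the boundary of a merely Lipschitz domain, are the delicate points. The sign condition \eqref{E2.3} is what keeps every nonlinear term favorable and independent of $k$ throughout, which is precisely why the a priori bounds survive the removal of the regularization.
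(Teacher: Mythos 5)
Your proof is correct and takes essentially the same approach as the paper: the paper disposes of Proposition \ref{P2.1} by citing the standard argument of \cite[\S 4.2.4]{Troltzsch2010} (truncation of the nonlinearity, monotone operator theory, and Stampacchia's $L^\infty$ estimate) together with \cite[Theorem 8.29]{Gilbarg-Trudinger83} for the global H\"older continuity, which is precisely the route you reconstructed. The only difference is that you write out in detail the steps the paper leaves to those references.
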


In the sequel we will denote $Y = C(\bar\Omega) \cap H_0^1(\Omega)$ and $S:L^{\hat p}(\omega) \longrightarrow Y$ the mapping associating to each control $u$ the corresponding state $S(u) = y_u$. We have the following differentiability property of $S$.
\begin{proposition}\label{P2.2}
The mapping $S:L^{\hat p}(\omega) \longrightarrow Y$ is of class $C^2$. For all elements $u, v$ and $w$ of $L^{\hat p}(\omega)$, the functions $z_v = S'(u)v$ and $z_{vw} = S''(u)(v,w)$ are the solutions of the problems
\begin{equation}\label{E2.7}
\left\{\begin{aligned} \displaystyle - \Delta z + \frac{\partial f}{\partial y}(x,y_u)z & = v\chi_\omega & & \mbox{in } \Omega,\\ z & = 0 & & \mbox{on } \Gamma,\end{aligned}\right.
\end{equation}
and
\begin{equation}\label{E2.8}
\left\{\begin{aligned} \displaystyle - \Delta z + \frac{\partial f}{\partial y}(x,y_u)z + \frac{\partial^2f}{\partial y^2}(x,y_u)z_{v}z_{w}& = 0 & & \mbox{in } \Omega,\\ z & = 0 & & \mbox{on } \Gamma,\end{aligned}\right.
\end{equation}
respectively.
\end{proposition}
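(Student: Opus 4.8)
The natural strategy is to realize $S$ as an implicitly defined map and invoke the Implicit Function Theorem in Banach spaces. I would first introduce the solution operator $T:L^{\hat p}(\Omega)\to Y$ of the Dirichlet problem $-\Delta w = g$, $w=0$ on $\Gamma$, which is well defined, linear and continuous by the elliptic regularity already used in Proposition~\ref{P2.1} (here the hypothesis $\hat p>\frac{n}{2}$ guarantees $w\in C^\sigma(\bar\Omega)$). The state equation \eqref{E1.1} is then equivalent to the fixed-point identity $y = T(u\chi_\omega - f(\cdot,y))$, so I would define
\[
\Phi:L^{\hat p}(\omega)\times Y\longrightarrow Y,\qquad \Phi(u,y) = y + T\big(f(\cdot,y)\big) - T(u\chi_\omega),
\]
and observe that $\Phi(u,y)=0$ if and only if $y=S(u)$.

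The core of the argument is to show that $\Phi$ is of class $C^2$. Since $T$ is linear and continuous and the dependence on $u$ is affine, this reduces to proving that the Nemytskii operator $N_f:Y\to L^{\hat p}(\Omega)$, $N_f(y)=f(\cdot,y)$, is twice continuously Fréchet differentiable with $N_f'(y)h=\frac{\partial f}{\partial y}(\cdot,y)h$ and $N_f''(y)(h_1,h_2)=\frac{\partial^2 f}{\partial y^2}(\cdot,y)h_1h_2$. This is the step I expect to be the main obstacle, because superposition operators are typically \emph{not} differentiable between Lebesgue spaces. The decisive point is that $Y$ embeds continuously into $C(\bar\Omega)$, so every $y\in Y$ is uniformly bounded, say $\|y\|_{C(\bar\Omega)}\le M$; assumption \eqref{E2.4} then provides uniform bounds on $\frac{\partial f}{\partial y}$ and $\frac{\partial^2 f}{\partial y^2}$ over the relevant range, and \eqref{E2.5} furnishes the local uniform continuity of $\frac{\partial^2 f}{\partial y^2}$ needed to pass to the limit in the difference quotients and to establish continuity of $y\mapsto N_f''(y)$. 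I would verify differentiability by writing the Taylor remainders as integrals of differences of $\frac{\partial f}{\partial y}$ (resp.\ $\frac{\partial^2 f}{\partial y^2}$) and estimating them in $L^{\hat p}(\Omega)$ using these bounds together with the uniform smallness of the increment guaranteed by the $C(\bar\Omega)$-topology.

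Next I would check that the partial derivative $\frac{\partial\Phi}{\partial y}(u,y)=I+T\big(\frac{\partial f}{\partial y}(\cdot,y)\,\cdot\big)$ is an isomorphism of $Y$ at every point, equivalently that the linearized Dirichlet problem $-\Delta z+\frac{\partial f}{\partial y}(\cdot,y)z=h$, $z=0$ on $\Gamma$, has a unique solution $z\in Y$ for each admissible datum. Existence and uniqueness in $H_0^1(\Omega)$ follow from the Lax--Milgram theorem, whose coercivity requirement is supplied precisely by the sign condition \eqref{E2.3}, namely $\frac{\partial f}{\partial y}\ge 0$, while the bound \eqref{E2.4} makes the zeroth-order coefficient an $L^\infty$-multiplier; the $C(\bar\Omega)$-regularity of $z$ is then recovered by the same elliptic estimate as in Proposition~\ref{P2.1}.

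With these two facts in hand the Implicit Function Theorem yields that $S$ is of class $C^2$ on $L^{\hat p}(\omega)$. Finally I would obtain the formulas \eqref{E2.7} and \eqref{E2.8} by differentiating the identity $\Phi(u,S(u))=0$. A first differentiation gives $S'(u)v + T\big(\frac{\partial f}{\partial y}(\cdot,y_u)S'(u)v\big)=T(v\chi_\omega)$, which upon applying $-\Delta$ is exactly \eqref{E2.7} for $z_v=S'(u)v$; a second differentiation, together with the product rule for $N_f''$, produces the inhomogeneous term $\frac{\partial^2 f}{\partial y^2}(\cdot,y_u)z_vz_w$ and hence \eqref{E2.8} for $z_{vw}=S''(u)(v,w)$.
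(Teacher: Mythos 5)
Your proof is correct and rests on the same pillars as the paper's argument: the implicit function theorem, the $C^2$ character of the Nemytskii operator $y\mapsto f(\cdot,y)$ from a space of uniformly bounded functions into $L^{\hat p}(\Omega)$ (for which you rightly invoke \eqref{E2.4}, \eqref{E2.5} and the embedding $Y\hookrightarrow C(\bar\Omega)$), and the monotonicity \eqref{E2.3} for the invertibility of the linearization. The implementation differs, however. The paper works with the strong-form map $\mathcal{F}(y,u)=-\Delta y+f(x,y)-u$ from $V\times L^{\hat p}(\Omega)$ into $L^{\hat p}(\Omega)$, where $V=\{y\in Y:\Delta y\in L^{\hat p}(\Omega)\}$; in that setting the statement that $\frac{\partial\mathcal{F}}{\partial y}(y,u)$ is an isomorphism \emph{is} exactly the linear elliptic solvability result, and the restriction to controls supported on $\omega\varsubsetneq\Omega$ is handled afterwards by writing $S=\hat S\circ S_\omega$ and applying the chain rule. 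You instead conjugate by the solution operator $T=(-\Delta)^{-1}$ and use the fixed-point form $\Phi(u,y)=y+T(f(\cdot,y))-T(u\chi_\omega)$ on $L^{\hat p}(\omega)\times Y$, which spares you the auxiliary space $V$ but makes the isomorphism step slightly less immediate than you state: for $w\in Y$, the equation $z+T\big(\frac{\partial f}{\partial y}(\cdot,y)z\big)=w$ is \emph{not} literally the Dirichlet problem with an $L^{\hat p}(\Omega)$ datum, because $\Delta w$ need not belong to $L^{\hat p}(\Omega)$ for a general $w\in Y$. The fix is routine: substitute $\zeta=z-w$, observe that $\zeta$ must solve $-\Delta\zeta+\frac{\partial f}{\partial y}(\cdot,y)\zeta=-\frac{\partial f}{\partial y}(\cdot,y)w$, whose right-hand side does lie in $L^{\hat p}(\Omega)$ since $\frac{\partial f}{\partial y}(\cdot,y)\in L^\infty(\Omega)$ by \eqref{E2.4} and $w\in C(\bar\Omega)$, and then apply Lax--Milgram together with the elliptic regularity behind Proposition~\ref{P2.1}; injectivity follows from \eqref{E2.3} as you say. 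With this small amendment your argument is complete, and both routes produce \eqref{E2.7} and \eqref{E2.8} by the identical differentiation of the implicit relation.
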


The proof is a consequence of the implicit function theorem. Let us give a sketch. We define the space
\[
V = \{y \in Y : \Delta y \in L^{\hat p}(\Omega)\}
\]
endowed with the norm
\[
\|y\|_V = \|y\|_{C(\bar\Omega))} + \|y\|_{H_0^1(\Omega)} + \|\Delta y\|_{L^{\hat p}(\Omega)}.
\]
Thus, $V$ is a Banach space. Now we introduce the mapping $\mathcal{F}:V \times L^{\hat p}(\Omega) \longrightarrow L^{\hat p}(\Omega)$ by
\[
\mathcal{F}(y,u) = -\Delta y + f(x,y) - u.
\]
From \eqref{E2.4} we deduce that $\mathcal{F}$ is of class $C^2$ and
\[
\frac{\partial \mathcal{F}}{\partial y}(y,u) z = - \Delta z + \frac{\partial f}{\partial y}(x,y)z.
\]
From the monotonicity condition \eqref{E2.3}, we obtain that $\frac{\partial \mathcal{F}}{\partial y}(y,u) :V \longrightarrow L^{\hat p}(\Omega)$ is an isomorphism. Hence, the implicit function theorem and Proposition \ref{P2.1} with $\omega = \Omega$ imply the existence of a $C^2$ mapping $\hat S:L^{\hat p}(\Omega) \longrightarrow Y$ associating to every element $u$ its corresponding state $\hat S(u) = y_u$. When $\omega \varsubsetneq \Omega$, we use that $S = \hat S \circ S_\omega$, where $S_\omega:L^{\hat p}(\omega) \longrightarrow L^{\hat p}(\Omega)$ is defined by $S_\omega u = u\chi_\omega$. Hence the chain rule leads to the result.

Next, we separate the smooth and the non smooth parts in $J$: $J(u) = F(u) + \alpha G(u) $ with
\[
F(u) = \frac{1}{2}\int_\Omega|y_u - y_d|^2\, dx + \frac{\beta}{2}\big(\int_\omega u(x)\, dx\big)^2 + \frac{\gamma}{2}\int_\omega u^2(x)\, dx \ \mbox{ and } \ G(u) = g(\nabla u),
\]
where $g:\mon \longrightarrow \mathbb{R}$ is given by $g(\mu) = \|\mu\|_\mon$. In the rest of this section we study the differentiability of $F$. From Proposition \ref{P2.2} and the chain rule the following proposition can be obtained.
\begin{proposition}
The functional $F:L^2(\omega) \longrightarrow \mathbb{R}$ is of class $C^2$. The derivatives of $F$ are given by
\begin{equation}
F'(u)v = \int_\omega\Big[\varphi_u(x) + \gamma u(x) + \beta\big(\int_\omega u(s)\, ds\big)\Big]v(x)\, dx,
\label{E2.9}
\end{equation}
and
\begin{equation}
F''(u)(v,w) = \int_\Omega\big(1 - \varphi_u\frac{\partial^2f}{\partial y^2}(x,y_u)\big)z_vz_w\, dx + \gamma\int_\omega vw\, dx + \beta\Big(\int_\omega v\, dx\Big)\Big(\int_\omega w\, dx\Big)
\label{E2.10}
\end{equation}
with $z_v = S'(u)v$, $z_w = S'(u)w$, and $\varphi_u \in Y$ the adjoint state which satisfies
\begin{equation}
\left\{\begin{aligned} \displaystyle - \Delta \varphi_u +  \frac{\partial f}{\partial y}(x,y_u)\varphi_u & = y_u - y_d & & \mbox{in }
\Omega,\\ \varphi_u & = 0 & & \mbox{on }
\Gamma.\end{aligned}\right.\label{E2.11}
\end{equation}
\label{P2.3}
\end{proposition}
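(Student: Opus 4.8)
The plan is to write $F = F_1 + F_2 + F_3$ as a sum of three functionals and treat each separately. The tracking term $F_1(u) = \frac12\|S(u) - y_d\|_{L^2(\Omega)}^2$ is the composition of the control-to-state map $S$ with the smooth quadratic $y \mapsto \frac12\|y - y_d\|_{L^2(\Omega)}^2$ on $L^2(\Omega)$, while the penalty $F_2(u) = \frac\beta2(\int_\omega u\,dx)^2$ and the Tikhonov term $F_3(u) = \frac\gamma2\int_\omega u^2\,dx$ are themselves smooth quadratic functionals on $L^2(\omega)$. Since every quadratic form is of class $C^\infty$ and $Y \hookrightarrow L^2(\Omega)$ continuously, the $C^2$ regularity of $F$ follows from Proposition~\ref{P2.2} and the chain rule, once $S$ is known to be $C^2$ on $L^2(\omega)$. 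For the latter I would first note that, because $n/2 < 2$, we may assume $n/2 < \hat p \le 2$ without loss of generality, as \eqref{E2.2} with a given exponent remains valid for every smaller exponent on the bounded domain $\Omega$. Then $L^2(\omega) \hookrightarrow L^{\hat p}(\omega)$, and $S$ restricted to $L^2(\omega)$ is the composition of this embedding with the $C^2$ map of Proposition~\ref{P2.2}, hence $C^2$.

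For the first derivative, differentiating the three summands gives
\[
F'(u)v = \int_\Omega (y_u - y_d)\,z_v\,dx + \gamma\int_\omega u v\,dx + \beta\Big(\int_\omega u\,dx\Big)\Big(\int_\omega v\,dx\Big),
\]
with $z_v = S'(u)v$ solving \eqref{E2.7}. The key step is to rewrite the first integral as an integral over $\omega$ by means of the adjoint state. Testing \eqref{E2.11} with $z_v$ and \eqref{E2.7} with $\varphi_u$ and comparing the two identities — both reduce to the symmetric bilinear form $\int_\Omega\big(\nabla\varphi_u\cdot\nabla z_v + \frac{\partial f}{\partial y}(x,y_u)\varphi_u z_v\big)\,dx$ — yields $\int_\Omega (y_u - y_d)z_v\,dx = \int_\omega \varphi_u v\,dx$, which gives \eqref{E2.9}. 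This integration-by-parts identity is legitimate since $z_v,\varphi_u \in H_0^1(\Omega)$ both vanish on $\Gamma$.

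For the second derivative I would differentiate $F'$ once more. The $\gamma$- and $\beta$-terms are quadratic and contribute $\gamma\int_\omega vw\,dx$ and $\beta(\int_\omega v\,dx)(\int_\omega w\,dx)$ directly. Differentiating $\int_\Omega (y_u - y_d)z_v\,dx$ in the direction $w$ produces two terms: $\int_\Omega z_v z_w\,dx$ from varying $y_u$, and $\int_\Omega (y_u - y_d)z_{vw}\,dx$ from varying $z_v = S'(u)v$, where $z_{vw} = S''(u)(v,w)$ solves \eqref{E2.8}. Applying the same adjoint identity to the latter, and using that $z_{vw}$ satisfies $-\Delta z_{vw} + \frac{\partial f}{\partial y}(x,y_u)z_{vw} = -\frac{\partial^2 f}{\partial y^2}(x,y_u)z_v z_w$, one obtains $\int_\Omega (y_u - y_d)z_{vw}\,dx = -\int_\Omega \varphi_u \frac{\partial^2 f}{\partial y^2}(x,y_u)z_v z_w\,dx$. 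Collecting the terms gives exactly \eqref{E2.10}.

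The differentiation itself is routine, being inherited from Proposition~\ref{P2.2}; the only point requiring genuine care is the reduction to an exponent $\hat p \le 2$, which is what makes $F$ well defined and of class $C^2$ on $L^2(\omega)$ rather than merely on $L^{\hat p}(\omega)$. The integrability needed for the adjoint identities and for moving $\frac{\partial^2 f}{\partial y^2}(x,y_u)$ inside the integrals is ensured by the local boundedness \eqref{E2.4} together with $y_u,\varphi_u \in C(\bar\Omega)$ and $z_v,z_w \in H_0^1(\Omega) \hookrightarrow L^2(\Omega)$, so no estimates beyond those already available are required.
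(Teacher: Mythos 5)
Your proof is correct and takes essentially the same route as the paper, whose own argument is precisely "Proposition~\ref{P2.2} plus the chain rule": you split off the smooth quadratic terms, compose the tracking term with $S$, and use the standard adjoint-state testing of \eqref{E2.7}, \eqref{E2.8}, and \eqref{E2.11} to arrive at \eqref{E2.9} and \eqref{E2.10}. Your explicit reduction to $n/2 < \hat p \le 2$, which justifies that $F$ is defined and $C^2$ on all of $L^2(\omega)$ via the embedding $L^2(\omega) \hookrightarrow L^{\hat p}(\omega)$, is a detail the paper leaves implicit, and you handle it correctly.
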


The $C(\bar\Omega)$ regularity of $\varphi_u$ follows from the assumptions on $y_d \in L^2(\Omega)$ and the fact that $y_u \in L^\infty(\Omega)$.

\begin{remark}
If $n = 2$, since $\BV$ is embedded in $L^2(\omega)$, then the functional $F:\BV \longrightarrow \mathbb{R}$ is well defined and it is of class $C^2$ with derivatives given by \eqref{E2.9} and \eqref{E2.10}. However, if $n = 3$, then $\BV$ is only embedded in $L^{3/2}(\omega)$. Hence, for elements $u \in \BV$ Proposition \ref{P2.1} is not applicable and, therefore, the functional $F$ is not defined in $\BV$. To deal with the case $n = 3$ we introduced the assumption \eqref{E1.2}, i.e. $\gamma > 0$. Hence, the functional $F:\BV \cap L^2(\omega) \longrightarrow \mathbb{R}$ is well defined and of class $C^2$.

The assumption \eqref{E1.2} can be avoided if we suppose that the nonlinearity $f(x,y)$ has only polynomial growth of arbitrary order in $y$. In this case, Propositions \ref{P2.1} and \ref{P2.2} hold if we change $Y$ to $Y_q = L^q(\Omega) \cap H_0^1(\Omega)$ with $q < \infty$ arbitrarily big. We recall that for a right hand side of the state equation belonging to $L^{3/2}(\Omega)$ the solution of the state equation does not belong to $L^\infty(\Omega)$, in general, even for linear equations. However, since $L^{3/2}(\Omega) \subset W^{-1,3}(\Omega)$, we can use \cite[Theorem 4.2]{Stampacchia65} to deduce that $y_u \in L^q(\Omega)$ $\forall q < \infty$. To analyze the semilinear case one can follow the classical approach of truncation of the nonlinear term, Schauder's fix point theorem, and $L^q$-estimates from the linear case combined with the monotonicity of the nonlinear term. Finally, since $\gamma = 0$, we have that the functional $F:\BV \longrightarrow \mathbb{R}$ is of class $C^2$.
\label{R2.1}
\end{remark}

\begin{remark}
In the state equation, the Laplace operator $-\Delta$ can be replaced by a more general linear elliptic operator with bounded coefficients. All the results proved in this paper hold for these general operators.
\label{R2.2}
\end{remark}

\section{Analysis of the functional $G$}
\label{S3}\setcounter{equation}{0}

Now, we analyze the functional $G$. We already expressed $G$ as the composition $G = g \circ \nabla$. Concerning the functional $g$, we note that it is Lipschitz continuous and convex. Hence, it has a subdifferential and a directional derivative, which are denoted by $\partial g(\mu)$ and $g'(\mu;\nu)$, respectively. Before giving an expression for $g'(\mu;\nu)$, we have to specify the norm that we use in $\mathbb{R}^n$. Indeed, in the definition of the norm $\|\mu\|_\mon$ we have considered a generic norm $| \cdot |$ in $\mathbb{R}^n$. The choice of the specific norm strongly influences the structure of the optimal controls. In this paper, we focus on the Euclidean and the $|\cdot|_\infty$ norms, which lead to different properties for $g$, that we consider separately in the following two subsections. To illustrate one aspect, let us observe that the use of the $|\cdot|_\infty$ norm on $\mathbb{R}^n$ in the definition of $\| \cdot \|_\mon$ implies that
\begin{equation}
\|\mu\|_\mon = \sum_{j = 1}^n\|\mu_j\|_\mo\ \  \forall \mu \in \mon.
\label{E3.1}
\end{equation}
In particular, it holds that
\[
\int_\omega|\nabla u| = \sum_{j = 1}^n\|\partial_{x_j}u\|_\mo\ \ \forall u \in \BV.
\]
However, for the Euclidean norm we have, in general, that
\begin{equation}
\|\mu\|_\mon \neq \Big(\sum_{j = 1}^n\|\mu_j\|^2_\mo\Big)^{1/2}.
\label{E3.2}
\end{equation}
Indeed, the identity \eqref{E3.1} is an immediate consequence of the definitions of the norms $\|\cdot\|_\mo$ and $\|\cdot\|_\mon$. To verify \eqref{E3.2} we give an example. Let us fix $n$ different points $\{\xi^i\}_{i = 1}^n$ in $\omega$ and take $\varepsilon > 0$ small enough such that the balls $B_\varepsilon(\xi^i)$ are disjoint. Now, applying Uryshon's lemma, cf.~\cite[Lemma 2.12]{Rudin70}, we get functions $z_i \in \C$ such that $0 \le z_i(x) \le 1$ $\forall x \in \omega$, $z_i(\xi^i) = 1$ and supp$(z_i) \subset B_\varepsilon(\xi^i)$. We set $z=(z_1,\ldots,z_n)$ and $\mu = (\delta_{\xi^1},\ldots,\delta_{\xi^n})$. Then, since $|z(x)|_2 \le 1$ $\forall x \in \omega$, we have
\[
\|\mu\|_\mon \ge \sum_{i = 1}^n\int_\omega z_i(x)\, d\mu_i(x) = \sum_{i = 1}^n z_i(\xi^i) = n.
\]
On the other hand, we get
\[
\Big(\sum_{j = 1}^n\|\mu_j\|^2_\mo\Big)^{1/2} = \sqrt{n}.
\]

\subsection{The use of the Euclidean norm $|\cdot|_2$}
\label{S3.1}
In order to give an expression for $g'(\mu;\nu)$, let us introduce some notation. We recall that if $\mu \in \mon$, its associated total variation measure is defined as a positive scalar measure as follows
\[
|\mu|(A) = \sup\Big\{\sum_{k = 1}^\infty|\mu(E_k)|_2 : \{E_k\}_k \subset \mathcal{B} \text{ are pairwise disjoint and } A = \bigcup_{k = 1}^\infty E_k\Big\},
\]
where $\mathcal{B}$ is the $\sigma$-algebra of Borel sets in $\omega$, and $|\mu(E_k)|_2$ denotes the Euclidean norm in $\mathbb{R}^n$ of the vector $\mu(E_k)$. Let us denote by $h_\mu$ the Radon-Nikodym derivative of $\mu$ with respect to $|\mu|$. Thus we have
\[
h_\mu \in L^1(\omega,|\mu|),\ |h_\mu(x)|_2 = 1 \text{ for }|\mu|-\text{a.e.}\, x \in \omega \text{ and } \mu(A) = \int_A h_\mu(x)\, d|\mu|(x) \ \forall A \in \mathcal{B}.
\]

Given a second vector measure $\nu \in \mon$, the following Lebesgue decomposition holds: $\nu = \nu_a + \nu_s$, $d\nu_a = h_\nu d|\mu|$, where $\nu_a$ and $\nu_s$ are the absolutely continuous and singular parts of $\nu$ with respect to $|\mu|$, and $h_\nu$ is the Radon-Nikodym derivative of $\nu$ with respect to $|\mu|$. Then, the following identity is fulfilled
\[
\|\nu\|_\mon = \|\nu_a\|_\mon + \|\nu_s\|_\mon = \int_\omega|h_\nu(x)|_2\, d|\mu|(x) + \|\nu_s\|_\mon.
\]
The reader is referred to \cite[Chapter 1]{AFP2000}.

Now, we analyze the subdifferential $\partial g(\mu)$. It is well known that an element $\lambda \in \partial g(\mu)$ if
\begin{equation}
\langle \lambda,\nu - \mu\rangle + \|\mu\|_\mon \le \|\nu\|_\mon\ \ \forall \nu \in \mon.
\label{E3.3}
\end{equation}
This is equivalent to the next two relations
\begin{align}
&\langle\lambda,\mu\rangle = \|\mu\|_\mon, \label{E3.4}\\
&\langle \lambda,\nu\rangle \le \|\nu\|_\mon\ \ \forall \nu \in \mon.\label{E3.5}
\end{align}
Observe that $\lambda$ belongs to the dual of $\mon$, which is not a distributional space. In the special case where $\lambda \in \Cn$, we can establish some precise relations between $\lambda$ and $\mu$. Before proving these relations, let us mention that here we have
\[
\|z\|_\Cn = \sup\{|z(x)|_2 : x \in \omega\}\quad \forall z \in \Cn.
\]

\begin{proposition}
If $\lambda \in \Cn \cap \partial g(\mu)$, then $\|\lambda\|_\Cn \le 1$. Moreover, if $\mu \neq 0$, then the following properties hold
\begin{enumerate}
\item $\|\lambda\|_\Cn = 1$, and
\item {\rm supp}$(\mu) \subset \{x \in \omega : |\lambda(x)|_2 = 1\}.$
\end{enumerate}
\label{P3.1}
\end{proposition}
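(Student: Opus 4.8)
The plan is to establish the uniform bound first, and then to extract the two sharper statements from the equality \eqref{E3.4} when $\mu\neq0$.

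First I would prove $\|\lambda\|_\Cn\le1$ by testing \eqref{E3.5} against vector Dirac masses. Fix $x_0\in\omega$ and a Euclidean unit vector $e\in\mathbb{R}^n$, and set $\nu=e\,\delta_{x_0}\in\mon$. Its total variation is $|\nu|(\omega)=|e|_2=1$, so $\|\nu\|_\mon=1$, whereas, because $\lambda\in\Cn$, the pairing is $\langle\lambda,\nu\rangle=\lambda(x_0)\cdot e$. Taking $e=\lambda(x_0)/|\lambda(x_0)|_2$ (the case $\lambda(x_0)=0$ being trivial), \eqref{E3.5} yields $|\lambda(x_0)|_2\le1$; since $x_0$ is arbitrary, $\|\lambda\|_\Cn=\sup_{x\in\omega}|\lambda(x)|_2\le1$.

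Assume now $\mu\neq0$. Writing \eqref{E3.4} through the Radon--Nikodym representation $\mu=h_\mu\,|\mu|$ with $|h_\mu|_2=1$ $|\mu|$-a.e.\ gives
\[
\int_\omega \lambda\cdot h_\mu\,d|\mu|=\|\mu\|_\mon=\int_\omega 1\,d|\mu|,
\]
hence $\int_\omega\big(1-\lambda\cdot h_\mu\big)\,d|\mu|=0$. By the Cauchy--Schwarz inequality and the bound already proved, $\lambda\cdot h_\mu\le|\lambda|_2|h_\mu|_2\le1$ $|\mu|$-a.e., so the integrand is nonnegative and therefore vanishes $|\mu|$-a.e. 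Equality in Cauchy--Schwarz then forces $|\lambda(x)|_2=1$ (in fact $\lambda(x)=h_\mu(x)$) for $|\mu|$-a.e.\ $x$. Since $\mu\neq0$ implies $|\mu|(\omega)>0$, this set has positive measure, so $\|\lambda\|_\Cn\ge1$; together with the upper bound this gives the first property.

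For the second property I would upgrade this almost-everywhere identity to the full support by continuity of $\lambda$. The set $A=\{x\in\omega:|\lambda(x)|_2=1\}$ is closed and satisfies $|\mu|(\omega\setminus A)=0$ by the previous step. If some $x_0\in\operatorname{supp}(\mu)$ satisfied $|\lambda(x_0)|_2<1$, continuity of $\lambda$ would furnish an open neighborhood $U\subset\omega\setminus A$ of $x_0$, whence $|\mu|(U)=0$, contradicting $x_0\in\operatorname{supp}(\mu)$. Thus $\operatorname{supp}(\mu)\subset A$, as claimed. The only genuinely delicate point is the equality analysis in Cauchy--Schwarz behind the first property; the Dirac-mass test and the support argument are routine once the Radon--Nikodym decomposition recalled before the statement is available.
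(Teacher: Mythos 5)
Your proof is correct and follows essentially the same route as the paper's: both rest on the Radon--Nikodym decomposition $\mu = h_\mu|\mu|$, deduce from \eqref{E3.4} that $\lambda\cdot h_\mu = 1$ (hence $\lambda = h_\mu$) $|\mu|$-a.e., and conclude the support inclusion from the fact that the open set $\{x \in \omega : |\lambda(x)|_2 < 1\}$ has $|\mu|$-measure zero. The only differences are expository: you spell out the Dirac-mass test for $\|\lambda\|_\Cn \le 1$ and the topological support argument, both of which the paper leaves implicit.
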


\begin{proof}
The inequality $\|\lambda\|_\Cn \le 1$ follows from \eqref{E3.5}. Additionally, if $\mu \neq 0$, then \eqref{E3.4} implies 1. To prove 2. we use \eqref{E3.4} as follows
\[
\int_\omega d|\mu|(x) = \|\mu\|_\mon = \langle\lambda,\mu\rangle = \int_\omega\lambda(x)\, d\mu(x) = \int_\omega \lambda(x)\cdot h_\mu(x)\, d|\mu|(x).
\]
Then, using that $|\lambda(x)|_2 \le 1$ $\forall x \in \omega$ and $|h_\mu(x)|_2 = 1$ $|\mu|$-a.e. in $\omega$ we deduce from the identity
\[
\int_\omega d|\mu|(x)  = \int_\omega \lambda(x)\cdot h_\mu(x)\, d|\mu|(x)
\]
that $\lambda(x) \cdot h_\mu(x) = 1$ $|\mu|$-a.e. in $\omega$. Using again that $|h_\mu(x)|_2 = 1$, $|\mu|$-a.e., we conclude that $\lambda(x) = h_\mu(x)$, $|\mu|$-a.e. Therefore, we have that
\[
|\mu|\big(\{x \in \omega : |\lambda(x)|_2 < 1\}\big) = 0,
\]
which implies 2.
\end{proof}

Next we study the directional derivatives of $g$.

\begin{proposition}
Let $\mu, \nu \in \mon$, then
\begin{equation}
g'(\mu;\nu) = \int_\omega h_\nu\, d\mu + \|\nu_s\|_\mon,
\label{E3.6}
\end{equation}
where $\nu = \nu_a + \nu_s = h_\nu d|\mu| + \nu_s$ is the Lebesgue decomposition of $\nu$ respect to $|\mu|$.
\label{P3.2}
\end{proposition}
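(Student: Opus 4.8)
The plan is to evaluate the one-sided limit
\[
g'(\mu;\nu) = \lim_{t\downarrow 0}\frac{\|\mu+t\nu\|_\mon - \|\mu\|_\mon}{t}
\]
directly, exploiting the Lebesgue decomposition of $\nu$ with respect to $|\mu|$ together with the additivity of the total variation across mutually singular measures. Since $g$ is convex and finite on all of $\mon$, this limit exists, so it suffices to identify its value.

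First I would write, for $t>0$,
\[
\mu + t\nu = (h_\mu + t\,h_\nu)\,d|\mu| + t\,\nu_s,
\]
and observe that $(h_\mu + t\,h_\nu)\,d|\mu|$ is absolutely continuous while $t\,\nu_s$ is singular with respect to $|\mu|$; being carried by disjoint sets, these two pieces have additive total variation, so
\[
\|\mu + t\nu\|_\mon = \int_\omega |h_\mu + t\,h_\nu|_2\, d|\mu| + t\,\|\nu_s\|_\mon .
\]
Because $|h_\mu(x)|_2 = 1$ for $|\mu|$-a.e.\ $x$, we have $\|\mu\|_\mon = |\mu|(\omega) = \int_\omega |h_\mu|_2\, d|\mu|$, whence the difference quotient splits as
\[
\frac{\|\mu+t\nu\|_\mon - \|\mu\|_\mon}{t} = \int_\omega \frac{|h_\mu + t\,h_\nu|_2 - |h_\mu|_2}{t}\, d|\mu| + \|\nu_s\|_\mon ,
\]
so the singular part already reproduces the second term of \eqref{E3.6} exactly.

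Next I would pass to the limit inside the integral. For $|\mu|$-a.e.\ $x$ the map $t\mapsto |h_\mu(x)+t\,h_\nu(x)|_2$ is differentiable at $t=0$ since $|h_\mu(x)|_2 = 1 \neq 0$, with derivative $h_\mu(x)\cdot h_\nu(x)/|h_\mu(x)|_2 = h_\mu(x)\cdot h_\nu(x)$; hence the integrand converges pointwise to $h_\mu\cdot h_\nu$. The interchange is justified by dominated convergence: the reverse triangle inequality gives $\bigl|\,|h_\mu + t\,h_\nu|_2 - |h_\mu|_2\,\bigr| \le t\,|h_\nu|_2$, so the difference quotient is bounded in absolute value by $|h_\nu|_2$, which lies in $L^1(\omega,|\mu|)$ because $\int_\omega |h_\nu|_2\, d|\mu| = \|\nu_a\|_\mon < \infty$. (Alternatively, convexity makes the difference quotient monotone in $t$, so monotone convergence applies.) This yields
\[
g'(\mu;\nu) = \int_\omega h_\mu\cdot h_\nu\, d|\mu| + \|\nu_s\|_\mon .
\]

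Finally I would rewrite the integral term using $d\mu = h_\mu\, d|\mu|$, namely $\int_\omega h_\nu\, d\mu = \sum_{j=1}^n \int_\omega (h_\nu)_j\, d\mu_j = \int_\omega h_\nu\cdot h_\mu\, d|\mu|$, which gives precisely \eqref{E3.6}. The only genuinely delicate step is the additivity of the total variation across the absolutely continuous and singular parts; the remainder is the differentiability of the Euclidean norm away from the origin combined with a routine dominated-convergence argument.
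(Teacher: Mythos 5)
Your proposal is correct and follows essentially the same route as the paper's proof: split $\|\mu+t\nu\|_\mon$ into the absolutely continuous part $\int_\omega|h_\mu+t\,h_\nu|_2\,d|\mu|$ plus $t\|\nu_s\|_\mon$ by mutual singularity, then pass to the limit via dominated convergence (quotients dominated by $|h_\nu|_2$) using the differentiability of $|\cdot|_2$ at $h_\mu(x)$ where $|h_\mu(x)|_2=1$. Your explicit justification of the domination by the reverse triangle inequality, and the remark that monotonicity of convex difference quotients gives an alternative, are minor refinements of the same argument.
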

\begin{proof}
As above, let us write $d\mu = h_\mu d|\mu|$. Then we have
\[
g'(\mu;\nu) = \lim_{\rho \searrow 0}\frac{\|\mu + \rho\nu\|_\mon - \|\mu\|_\mon}{\rho}
 \]
\[
 = \lim_{\rho \searrow 0}\frac{\|\mu + \rho\nu_a\|_\mon + \|\rho\nu_s\|_\mon - \|\mu\|_\mon}{\rho}
\]
\[
= \lim_{\rho \searrow 0}\frac{1}{\rho}\left(\int_\omega|h_\mu(x) + \rho h_\nu(x)|_2\, d|\mu|(x) - \int_\omega|h_\mu(x)|_2\, d|\mu|(x)\right) + \|\nu_s\|_\mon
\]
\[
= \int_\omega\lim_{\rho \searrow 0}\frac{|h_\mu(x) + \rho h_\nu(x)|_2 - |h_\mu(x)|_2}{\rho}\, d|\mu|(x) + \|\nu_s\|_\mon
\]
\[
= \int_\omega \frac{h_\mu(x)\cdot h_\nu(x)}{|h_\mu(x)|_2}\, d|\mu|(x) + \|\nu_s\|_\mon = \int_\omega h_\nu\, d\mu + \|\nu_s\|_\mon.
\]
Since the quotients are dominated by $|h_\nu|_2$, we applied Lebesgue's dominated convergence theorem above. Moreover, we use that $|h_\mu(x)|_2 = 1$ $|\mu|$-a.e.~in $\omega$ in the last equality and also to justify the differentiability of the norm $|\cdot |_2$ at every $h_\mu(x)$ with $x$ in the support of $|\mu|$.
\end{proof}

Now, we come back to the mapping $G$. To this end, let us recall that the adjoint operator $\nabla^*$ is defined by
\[
\nabla^*:[\mon]^*\longrightarrow\BV^*,\quad
\langle \nabla^*\lambda,u\rangle_{\BV^*,\BV} = \langle\lambda,\nabla u\rangle_{[\mon]^*,\mon}.
\]
\begin{proposition}
The following identities hold for all $u \in \BV$:
\begin{align}
&\partial G(u) = \partial(g\circ \nabla)(u) = \nabla^*\partial g(\nabla u),\label{E3.7}\\
&G'(u;v) = (g\circ \nabla)'(u;v) = \int_\omega h_v\, d(\nabla u) + \|(\nabla v)_{s}\|_\mon, \label{E3.8}
\end{align}
where $\nabla v = h_vd|\nabla u| + (\nabla v)_{s}$  is the Lebesgue decomposition of $\nabla v$ with respect to $|\nabla u|$.
\label{P3.3}
\end{proposition}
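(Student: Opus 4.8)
The plan is to treat the two identities separately, establishing the directional-derivative formula \eqref{E3.8} first, since it is the more elementary, and then addressing the subdifferential formula \eqref{E3.7}. The guiding principle throughout is that $\nabla$ is linear and continuous, so that everything can be transported to the level of $g$, where Proposition \ref{P3.2} is available.

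For \eqref{E3.8}, the key observation is that linearity of $\nabla$ gives $G(u + \rho v) = g(\nabla u + \rho\,\nabla v)$ for every $\rho > 0$. Forming the difference quotient and letting $\rho \searrow 0$, the limit defining $G'(u;v)$ coincides termwise with the one defining $g'(\nabla u;\nabla v)$, so that $G'(u;v) = g'(\nabla u;\nabla v)$. Applying Proposition \ref{P3.2} with $\mu = \nabla u$ and $\nu = \nabla v$ — whereby the Radon--Nikodym derivative $h_\nu$ of that statement is exactly the density $h_v$ used here and $\nu_s = (\nabla v)_s$ — then yields the asserted expression $\int_\omega h_v\, d(\nabla u) + \|(\nabla v)_s\|_\mon$ at once.

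For the subdifferential identity \eqref{E3.7}, the inclusion $\nabla^*\partial g(\nabla u) \subseteq \partial G(u)$ is immediate. If $\lambda \in \partial g(\nabla u)$, then specializing the defining inequality \eqref{E3.3} to $\nu = \nabla w$ and invoking linearity of $\nabla$ gives, for every $w \in \BV$,
\[
G(w) - G(u) = g(\nabla w) - g(\nabla u) \ge \langle \lambda, \nabla w - \nabla u\rangle_{[\mon]^*,\mon} = \langle \nabla^*\lambda, w - u\rangle_{\BV^*,\BV},
\]
so that $\nabla^*\lambda \in \partial G(u)$. The reverse inclusion $\partial G(u) \subseteq \nabla^*\partial g(\nabla u)$ is the substantive point and I would obtain it from the classical chain rule of convex analysis for a convex function composed with a continuous linear map. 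The feature that makes this rule applicable is that $g$ is finite and continuous (indeed Lipschitz) at every point of $\mon$, in particular at every point of the range of $\nabla$; this continuity furnishes the constraint qualification under which $\partial(g\circ\nabla)(u) = \nabla^*\partial g(\nabla u)$ holds.

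I expect this reverse inclusion to be the main obstacle. Since $\mon$ is not reflexive and its dual is a large, non-distributional space — as already noted after \eqref{E3.5} — one cannot rely on differentiability or compactness; the argument must instead rest on a Hahn--Banach separation carried out in an appropriate product space, combined precisely with the global continuity of the norm $g$. By contrast, the directional-derivative identity and the easy inclusion are direct consequences of the linearity of $\nabla$ together with Proposition \ref{P3.2}.
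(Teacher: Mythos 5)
Your proposal is correct and follows essentially the same route as the paper: the directional-derivative identity \eqref{E3.8} is obtained exactly as in the paper, by noting $G'(u;v) = g'(\nabla u;\nabla v)$ via linearity of $\nabla$ and then applying Proposition \ref{P3.2}, while the subdifferential identity \eqref{E3.7} rests on the same classical chain rule of convex analysis (the paper cites Ekeland--Temam, Chapter I, Proposition 5.7), whose hypothesis --- continuity of the convex function $g$ on $\mon$ together with linearity and continuity of $\nabla$ --- you correctly identify as the operative constraint qualification. Your separate hand verification of the easy inclusion $\nabla^*\partial g(\nabla u) \subseteq \partial G(u)$ is a harmless elaboration of what the cited chain rule already delivers.
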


\begin{proof}
Since $\nabla:\BV \longrightarrow \mon$ is a linear and continuous mapping and $g:\mon \longrightarrow \mathbb{R}$ is convex and continuous, we can apply the chain rule \cite[Chapter~I, Proposition~5.7]{Ekeland-Temam74A} to deduce that $\partial(g \circ \nabla)(u) = \nabla^*\partial g(\nabla u)$, which immediately leads to \eqref{E3.7}.

To verify \eqref{E3.8} it is enough to observe that
\[
(g \circ \nabla)'(u;v) = g'(\nabla u;\nabla v)
\]
and to apply \eqref{E3.6}.
\end{proof}

\subsection{The use of the $|\cdot|_\infty$ norm}
\label{S3.2}

The use of  $|\cdot|_\infty$ norm implies that
\[
\|z\|_\Cn = \sup\{|z(x)|_\infty : x \in \omega\}\quad \forall z \in \Cn.
\]

We recall that every scalar real measure $\mu \in \mo$ admits a Jordan decomposition $\mu = \mu^+ - \mu^-$, where $\mu^+$ and $\mu^-$ are positive measures with disjoint supports. Further, if $h_\mu$ is the Radon-Nikodym derivative of $\mu$ with respect to $|\mu|$, then $\mu^+ = h^+ d|\mu|$ and $\mu^- = h^- d|\mu|$, where $h = h^+ - h^-$ is the decomposition of $h$ in positive and negative parts.

\begin{proposition}
If $\lambda \in \Cn \cap \partial g(\mu)$, then $\|\lambda_j\|_\C \le 1$ for all $j = 1,\ldots,n$. Moreover, if $\mu_j \neq 0$, then the following properties hold
\begin{enumerate}
\item $\|\lambda_j\|_\C = 1$, and
\item {\rm supp}$(\mu_j^+) \subset \{x \in \omega : \lambda_j(x) = +1\}$ and {\rm supp}$(\mu_j^-) \subset \{x \in \omega : \lambda_j(x) = -1\}$.
\end{enumerate}
\label{P3.4}
\end{proposition}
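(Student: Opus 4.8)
The plan is to mimic the proof of Proposition~\ref{P3.1}, but to exploit the fact that for the $|\cdot|_\infty$ norm the functional $g$ decouples across the components: by \eqref{E3.1} we have $\|\mu\|_\mon=\sum_{j=1}^n\|\mu_j\|_\mo$, and since $\lambda\in\Cn$ the pairing splits as $\langle\lambda,\nu\rangle=\sum_{j=1}^n\int_\omega\lambda_j\,d\nu_j$. This additive structure is what lets me reduce the $n$-dimensional subdifferential inequalities to $n$ independent scalar statements, one per coordinate.

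First I would establish the bound $\|\lambda_j\|_\C\le1$. In \eqref{E3.5} I insert test measures $\nu$ whose only nonzero component is the $j$-th, i.e.\ $\nu=(0,\dots,\nu_j,\dots,0)$; then $\|\nu\|_\mon=\|\nu_j\|_\mo$ and the inequality collapses to $\int_\omega\lambda_j\,d\nu_j\le\|\nu_j\|_\mo$ for every $\nu_j\in\mo$. Replacing $\nu_j$ by $-\nu_j$ and recalling that $\C$ sits isometrically inside the dual of $\mo$ (so that $\|z\|_\C=\sup\{\int_\omega z\,d\nu_j:\|\nu_j\|_\mo\le1\}$), this yields $\|\lambda_j\|_\C\le1$.

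Next, under the hypothesis $\mu_j\neq0$, the key step is a termwise-equality argument. I rewrite \eqref{E3.4} as $\sum_{j=1}^n\int_\omega\lambda_j\,d\mu_j=\sum_{j=1}^n\|\mu_j\|_\mo$. For each $j$ the bound just proved gives $\int_\omega\lambda_j\,d\mu_j\le\|\lambda_j\|_\C\|\mu_j\|_\mo\le\|\mu_j\|_\mo$, so each summand on the left is dominated by the corresponding summand on the right; equality of the totals therefore forces $\int_\omega\lambda_j\,d\mu_j=\|\mu_j\|_\mo$ for every $j$. When $\mu_j\neq0$ this gives $\|\lambda_j\|_\C\ge\int_\omega\lambda_j\,d\mu_j/\|\mu_j\|_\mo=1$, which together with the upper bound proves item~1.

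Finally, for the support statement in item~2, I use the scalar Jordan/Radon--Nikodym decomposition recalled before the proposition: $\mu_j=h_{\mu_j}\,d|\mu_j|$ with $h_{\mu_j}\in\{+1,-1\}$ $|\mu_j|$-a.e. From $\int_\omega\lambda_j h_{\mu_j}\,d|\mu_j|=\int_\omega d|\mu_j|$ and $\lambda_j h_{\mu_j}\le1$ pointwise, the nonnegative integrand $1-\lambda_j h_{\mu_j}$ has zero integral, so $\lambda_j h_{\mu_j}=1$ $|\mu_j|$-a.e.; hence $\lambda_j=+1$ $\mu_j^+$-a.e.\ and $\lambda_j=-1$ $\mu_j^-$-a.e. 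The part I would be most careful about is the passage from these almost-everywhere identities to the claimed inclusions of supports: here the continuity of $\lambda_j$ (i.e.\ the hypothesis $\lambda\in\Cn$) is essential, since $\{x:\lambda_j(x)=1\}$ is then closed and of full $\mu_j^+$-measure, and $\operatorname{supp}(\mu_j^+)$, being the smallest closed set of full measure, must be contained in it (and likewise for $\mu_j^-$). This is the only genuinely nontrivial point; the rest is coordinatewise bookkeeping made possible by \eqref{E3.1}.
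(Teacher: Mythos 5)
Your proposal is correct, and for the uniform bound and item~1 it follows essentially the paper's route: decouple the subdifferential relations \eqref{E3.4}--\eqref{E3.5} componentwise via \eqref{E3.1}, test with measures supported in a single coordinate, and then force termwise equality in the resulting sum of inequalities. (The paper tests with $\nu_j=\pm\delta_x$ to get the pointwise bound $|\lambda_j(x)|\le 1$ directly; your duality formulation $\|z\|_\C=\sup\{\int_\omega z\,d\nu_j:\|\nu_j\|_\mo\le1\}$ is the same argument, since that supremum is itself realized by Dirac masses.) The genuine difference is item~2: the paper does not prove it at all, but simply cites \cite[Lemma~3.4]{CCK2013}, whereas you give a complete self-contained argument. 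Your argument is sound: the termwise equality $\langle\mu_j,\lambda_j\rangle=\|\mu_j\|_\mo$ (which your bookkeeping for item~1 conveniently already produces) combined with $\lambda_j h_{\mu_j}\le 1$ $|\mu_j|$-a.e.\ gives a nonnegative integrand $1-\lambda_j h_{\mu_j}$ with zero integral, hence $\lambda_j=\pm1$ holds $\mu_j^{\pm}$-a.e.; and you correctly identify the one delicate point, namely that passing from the a.e.\ identity to the support inclusion requires the continuity of $\lambda_j$, so that $\{x\in\omega:\lambda_j(x)=\pm1\}$ is relatively closed, of full $\mu_j^{\pm}$-measure, and therefore contains $\operatorname{supp}(\mu_j^{\pm})$ as the smallest such set. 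So your write-up is not only correct but more self-contained than the paper's at exactly the point the paper outsources.
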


\begin{proof}
Inserting \eqref{E3.1} in \eqref{E3.4} and \eqref{E3.5} we get
\begin{align}
&\sum_{i = 1}^n\langle\mu_i,\lambda_i\rangle = \sum_{i = 1}^n\|\mu_i\|_\mo,\label{E3.9}\\
&\sum_{i = 1}^n\langle\nu_i,\lambda_i\rangle \le \sum_{i = 1}^n\|\nu_i\|_\mo\quad \forall \nu \in \mon.\label{E3.10}
\end{align}
Let us fix $1 \le j \le n$ and take in \eqref{E3.10} $\nu_i = 0$ for every $i \neq  j$ and $\nu_j = \pm\delta_x$ with $x \in  \omega$ arbitrary. Then, we obtain
\[
\pm\lambda_j(x) = \langle\nu_j,\lambda_j\rangle \le \|\nu_j\|_\mo = 1.
\]
This proves that $|\lambda_j(x)| \le 1$ $\forall x \in \omega$ for every $j$. Now, we assume that $\mu_j \neq 0$. From \eqref{E3.9}  we infer
\[
\sum_{i = 1}^n\|\mu_i\|_\mo = \sum_{i = 1}^n\langle\mu_i,\lambda_i\rangle \le \sum_{i = 1}^n\|\mu_i\|_\mo\|\lambda_i\|_\C \le \sum_{i = 1}^n\|\mu_i\|_\mo.
\]
This implies that $\|\lambda_i\|_\C = 1$ for every $i$ such that $\mu_i \neq 0$. Hence, 1.~holds. The second part was proved in \cite[Lemma 3.4]{CCK2013}.
\end{proof}

Now, we compute the directional derivatives of $g'(\mu;\nu)$. Then, we have the following expression which is similar but different from the one obtained in Proposition \ref{P3.2}.

\begin{proposition}
Let $\mu, \nu \in \mon$, then
\begin{equation}
g'(\mu;\nu) = \int_\omega h_\nu\, d\mu + \|\nu_s\|_\mon = \sum_{j = 1}^n\Big\{\int_\omega h_{\nu_j}\, d\mu_j + \|(\nu_j)_s\|_\mo\Big\},
\label{E3.11}
\end{equation}
where $\nu_j = (\nu_j)_a + (\nu_{j})_s = h_{\nu_j} d|\mu_j| + (\nu_j)_s$ is the Lebesgue decomposition of $\nu_j$ with respect to $|\mu_j|$ for $1 \le j \le n$.
\label{P3.5}
\end{proposition}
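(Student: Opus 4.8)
The plan is to reduce the statement to $n$ independent scalar computations by exploiting the decoupling identity \eqref{E3.1}, which is the characteristic feature of the $|\cdot|_\infty$ norm. Writing $g_j(\sigma) = \|\sigma\|_\mo$ for a scalar measure $\sigma \in \mo$, identity \eqref{E3.1} gives $g(\mu) = \sum_{j=1}^n g_j(\mu_j)$, and more generally $\|\mu + \rho\nu\|_\mon = \sum_{j=1}^n \|\mu_j + \rho\nu_j\|_\mo$ for every $\rho > 0$. Accordingly, I would form the difference quotient defining $g'(\mu;\nu)$ and split it along these components:
\[
\frac{\|\mu + \rho\nu\|_\mon - \|\mu\|_\mon}{\rho} = \sum_{j = 1}^n \frac{\|\mu_j + \rho\nu_j\|_\mo - \|\mu_j\|_\mo}{\rho}.
\]
Since each $g_j$ is convex and Lipschitz, every summand converges as $\rho \searrow 0$ to the scalar directional derivative $g_j'(\mu_j;\nu_j)$; the sum being finite, the limit passes through and I obtain $g'(\mu;\nu) = \sum_{j = 1}^n g_j'(\mu_j;\nu_j)$.

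Next I would evaluate each scalar term by invoking Proposition \ref{P3.2} in the one-dimensional case $n = 1$, in which the Euclidean norm on $\mathbb{R}$ coincides with the absolute value, so that the proof given there applies verbatim to each pair $(\mu_j,\nu_j) \in \mo \times \mo$. Taking the Lebesgue decomposition $\nu_j = h_{\nu_j}\,d|\mu_j| + (\nu_j)_s$ of $\nu_j$ with respect to $|\mu_j|$, Proposition \ref{P3.2} yields $g_j'(\mu_j;\nu_j) = \int_\omega h_{\nu_j}\,d\mu_j + \|(\nu_j)_s\|_\mo$, which after summation gives the second equality in \eqref{E3.11}. The first equality then follows by reading the componentwise data back as vector quantities: the pairing is additive, so $\int_\omega h_\nu\,d\mu = \sum_{j = 1}^n \int_\omega h_{\nu_j}\,d\mu_j$, and applying \eqref{E3.1} once more to the singular part $\nu_s = ((\nu_1)_s,\ldots,(\nu_n)_s)$ gives $\|\nu_s\|_\mon = \sum_{j = 1}^n \|(\nu_j)_s\|_\mo$.

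The computation itself is routine once the reduction is in place; the point that requires care, and the genuine difference from the Euclidean setting of Proposition \ref{P3.2}, is that here the Lebesgue decompositions and Radon--Nikodym derivatives must be taken separately in each coordinate, with respect to the scalar total variations $|\mu_j|$, rather than jointly with respect to $|\mu|$. It is precisely the decoupling \eqref{E3.1} that legitimizes this componentwise splitting and makes the reduction to the scalar case possible; for a general norm, such as the Euclidean one, no such splitting is available, which is why the two propositions, although formally similar, rest on structurally different decompositions.
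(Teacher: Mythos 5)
Your proof is correct and takes essentially the same route as the paper: the paper's own proof likewise uses \eqref{E3.1} to split the difference quotient into $n$ scalar difference quotients and then handles each scalar limit by reference to an earlier scalar result (Proposition~3.3 of Casas--Kunisch, 2014). The only difference is that you settle the scalar case by specializing Proposition~\ref{P3.2} to $n=1$, where $|\cdot|_2$ reduces to the absolute value (differentiable at $\pm 1$, which is all the proof needs), making the argument self-contained within the paper; your reassembly of the two equalities in \eqref{E3.11} via additivity of the pairing and \eqref{E3.1} applied to the singular parts is also sound.
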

\begin{proof}
For the proof it is enough use \eqref{E3.1} to obtain
 \[
g'(\mu;\nu) = \lim_{\rho \searrow 0}\frac{\|\mu + \rho\nu\|_\mon - \|\mu\|_\mon}{\rho}
= \sum_{i = 1}^n\lim_{\rho \searrow 0}\frac{\|\mu_i + \rho\nu_i\|_\mon - \|\mu_i\|_\mon}{\rho}.
\]
Then, we proceed as in the proof of \cite[Proposition 3.3]{Casas-Kunisch2014}.
\end{proof}

With the same proof we infer that Proposition \ref{P3.3} is also true for the $|\cdot|_\infty$ norm with \eqref{E3.8} being interpreted as follows
\begin{align}
&G'(u;v) = (g\circ \nabla)'(u;v) = \int_\omega h_v\, d(\nabla u) + \|(\nabla v)_{s}\|_\mon \notag\\
&=\sum_{j = 1}^n\Big\{\int_\omega h_{v,j}\, d(\partial_{x_j}u) + \|(\partial_{x_j}v)_s\|_\mo\Big\},
\label{E3.12}
\end{align}
where $\partial_{x_j} v = h_{v,j}|\partial_{x_j}u| + (\partial_{x_j}v)_s$ is the Lebesgue decomposition of $\partial_{x_j}v$ with respect to $|\partial_{x_j}u|$.

\section{Existence of an optimal control and first order optimality conditions}
\label{S4}\setcounter{equation}{0}

The proof of the existence of an optimal control follows the lines of \cite[Theorem 3.1]{CKK2016} with the obvious modifications.

\begin{theorem}
Let us assume that one of the following assumptions hold.
\begin{enumerate}
\item $\beta + \gamma  > 0$.
\item There exist $q \in [1,2)$ and $C > 0$ such that
\end{enumerate}
\[
\frac{\partial f}{\partial y}(x,y) \le C(1 + |y|^q)\ \ \mbox{for a.a. } x \in \Omega \text{ and } \forall y \in \mathbb{R}.
\]
Then, problem \Pb has at least one solution. Moreover, if $f$ is affine with respect to $y$, the solution is unique.
\label{T4.1}
\end{theorem}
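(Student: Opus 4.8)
The plan is to use the direct method of the calculus of variations. Since $J(0) < \infty$, the infimum $j := \inf_{u \in \BV} J(u)$ is finite, so I fix a minimizing sequence $\{u_k\} \subset \BV$ with $J(u_k) \to j$; in the case $n = 3$ (where \eqref{E1.2} forces $\gamma > 0$) I view the $u_k$ as elements of $\BV \cap L^2(\omega)$. Because each term of $J$ is nonnegative and $\alpha > 0$, boundedness of $J(u_k)$ immediately bounds the seminorm $\|\nabla u_k\|_\mon$, and hence, through the equivalence \eqref{E2.1}, bounds the zero-mean part $\hat u_k = u_k - a_{u_k}$ in $\BV$. The whole difficulty is therefore to bound the means $a_{u_k}$; once this is done, \eqref{E2.1} gives a uniform bound on $\|u_k\|_\BV$.

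Bounding the mean splits according to the two hypotheses. Under assumption~1 it is immediate: if $\gamma > 0$ the term $\frac{\gamma}{2}\int_\omega u_k^2$ bounds $\|u_k\|_{L^2(\omega)}$ and thus $|a_{u_k}|$, while if $\beta > 0$ the term $\frac{\beta}{2}(\int_\omega u_k)^2 = \frac{\beta}{2}|\omega|^2 a_{u_k}^2$ bounds $|a_{u_k}|$ directly. Under assumption~2 I would argue by contradiction, supposing $|a_{u_k}| \to \infty$ along a subsequence. Testing the state equation \eqref{E1.1} with the solution $\phi$ of $-\Delta\phi = 1$, $\phi|_\Gamma = 0$ (so $\phi \ge 0$ and $\phi \in L^\infty(\Omega)$) and integrating by parts yields the identity $\int_\Omega y_{u_k} + \int_\Omega f(x,y_{u_k})\phi = a_{u_k}\int_\omega\phi + \int_\omega \hat u_k\phi$; since $\hat u_k$ is bounded in $L^1(\omega)$, the right-hand side diverges. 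The growth condition $\frac{\partial f}{\partial y}(x,y) \le C(1 + |y|^q)$ with $q < 2$, integrated in $y$, controls $f(x,y_{u_k})$ by a power of $y_{u_k}$ that stays strictly below the $H_0^1(\Omega)$-critical growth for $n\le 3$, so the divergence of the left-hand side can only be sustained by $\|y_{u_k}\|_{L^2(\Omega)} \to \infty$, whence the tracking term $\frac12\|y_{u_k}-y_d\|^2_{L^2(\Omega)}\to\infty$, contradicting that $\{u_k\}$ is minimizing. \emph{This coercivity-in-the-mean step under assumption~2, where the exponent restriction $q < 2$ must be matched against the Sobolev embedding of $H_0^1(\Omega)$ and is the only place the dimensional restriction is genuinely needed, is the main obstacle.}

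With $\{u_k\}$ bounded in $\BV$, I would extract a subsequence converging weakly$^*$ to some $u^* \in \BV$; by the properties recalled after \eqref{E2.1} this gives $u_k \to u^*$ strongly in $L^p(\omega)$ for every $p < \frac{n}{n-1}$ and $a_{u_k}\to a_{u^*}$. To pass to the limit in the nonlinear tracking term I need strong convergence in $L^{\hat p}(\omega)$: for $n = 2$ one may take $\hat p < 2 = \frac{n}{n-1}$ and this is immediate, while for $n = 3$, where $\hat p$ may be assumed to lie in $(\tfrac{3}{2},2)$, I would combine the strong $L^1(\omega)$ convergence with the $L^2(\omega)$ bound coming from $\gamma > 0$ and interpolate to obtain strong convergence in $L^{\hat p}(\omega)$. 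Then continuity of $S:L^{\hat p}(\omega)\to Y$ (Proposition~\ref{P2.1}) gives $y_{u_k}\to y_{u^*}$ in $Y$ and convergence of the tracking term; the $\beta$-term converges since $a_{u_k}\to a_{u^*}$; the $\gamma$-term is weakly lower semicontinuous on $L^2(\omega)$; and $u\mapsto\|\nabla u\|_\mon$ is weakly$^*$ lower semicontinuous, being a supremum of functionals continuous with respect to the strong $L^1(\omega)$ convergence entailed by weak$^*$ convergence in $\BV$. Collecting these, $J(u^*)\le\liminf_k J(u_k)=j$, so $u^*$ is a minimizer.

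Finally, for uniqueness when $f(x,y)=c_0(x)y+d_0(x)$ is affine, the map $S$ is affine and its linear part $v\mapsto z_v=S'(u)v$ is injective, since $z_v\equiv 0$ forces $v\chi_\omega=0$ in $\Omega$ and hence $v=0$ on $\omega$. Consequently the tracking term $u\mapsto\frac12\|S(u)-y_d\|^2_{L^2(\Omega)}$ is strictly convex, and adding the convex terms $\alpha\int_\omega|\nabla u|$, $\frac{\beta}{2}(\int_\omega u)^2$ and $\frac{\gamma}{2}\int_\omega u^2$ preserves strict convexity of $J$; strict convexity then yields uniqueness of the minimizer.
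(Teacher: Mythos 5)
Your overall strategy coincides with the paper's: the paper proves this theorem by the direct method ``following the lines of \cite[Theorem 3.1]{CKK2016} with the obvious modifications,'' i.e.\ bound the BV-seminorm using $\alpha>0$, bound the means $a_{u_k}$ separately under each hypothesis, then use weak$^*$ compactness of $\BV$ and lower semicontinuity. Your treatment of hypothesis 1, of the passage to the limit (including the interpolation fix needed for $n=3$, where \eqref{E1.2} supplies the $L^2$ bound), and of uniqueness in the affine case is correct.

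The coercivity step under hypothesis 2, however, contains a genuine gap, precisely at the point you yourself flag as the main obstacle. From the identity $\int_\Omega y_{u_k}\,dx+\int_\Omega f(x,y_{u_k})\phi\,dx=a_{u_k}\int_\omega\phi\,dx+\int_\omega\hat u_k\phi\,dx$ you conclude that divergence of the right-hand side ``can only be sustained by $\|y_{u_k}\|_{L^2(\Omega)}\to\infty$'' because the growth of $f$ is $H_0^1$-subcritical. This implication does not follow from what you wrote: integrating the growth condition gives $f(x,y)\le f(x,0)+C\big(|y|+|y|^{q+1}\big)$ with $q+1$ possibly in $(2,3)$, and a sequence bounded in $L^2(\Omega)$ can perfectly well have $\|y_{u_k}\|_{L^{q+1}(\Omega)}\to\infty$; subcriticality is useful only in combination with an $H_0^1(\Omega)$ bound, and no such bound is available a priori --- on the contrary, $\|y_{u_k}\|_{H_0^1(\Omega)}$ does blow up when $|a_{u_k}|\to\infty$. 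To close the gap one needs the quantitative chain: (i) test the state equation with $y_{u_k}$ and use the monotonicity \eqref{E2.3}, the bound $\|y_{u_k}\|_{L^2(\Omega)}\le M$ coming from the minimizing sequence, and the BV-bound on $\hat u_k$, to obtain the energy estimate $\|y_{u_k}\|^2_{H_0^1(\Omega)}\le C\big(1+|a_{u_k}|\big)$; (ii) interpolate (Gagliardo--Nirenberg) to get $\int_\Omega|y_{u_k}|^{q+1}dx\le C\|y_{u_k}\|^{(1-\lambda)(q+1)}_{L^2(\Omega)}\|y_{u_k}\|^{\lambda(q+1)}_{H_0^1(\Omega)}\le C\big(1+|a_{u_k}|\big)^{\theta}$, where $\theta=\lambda(q+1)/2$ equals $(q-1)/2$ for $n=2$ and $3(q-1)/4$ for $n=3$; (iii) observe that $q<2$ gives $\theta<1$, so the duality identity forces $c_0|a_{u_k}|-C\le C'\big(1+|a_{u_k}|^{\theta}\big)$, impossible for large $k$. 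Steps (i)--(iii) are where the hypothesis $q<2$ actually enters; your proposal never uses that numerical restriction anywhere, which is the telltale sign that the decisive argument is missing.
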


Now, we prove the first order optimality conditions satisfied by any local minimum of \Pb.

\begin{theorem}
Let $\bar u$ be a local solution of \Pb. Then, there exists $\bar\lambda \in \partial g(\nabla\bar u)$ such that
\begin{equation}
\alpha\langle\bar\lambda,\nabla v\rangle_{[\mon]^*,\mon} + \int_\omega\Big(\bar\varphi + \gamma\bar u + \beta\int_\omega\bar u\, dz\Big)v\, dx = 0 \ \ \forall v \in \BV\cap L^2(\omega),
\label{E4.1}
\end{equation}
where $\bar\varphi \in H_0^1(\Omega) \cap C(\bar\Omega)$ is the adjoint state corresponding to $\bar u$.
\label{T4.2}
\end{theorem}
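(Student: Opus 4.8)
The plan is to exploit the decomposition $J = F + \alpha G$ from Section 2, where, by Proposition \ref{P2.3}, $F$ is of class $C^1$ on $L^2(\omega)$ (hence on $\BV\cap L^2(\omega)$), while $G = g\circ\nabla$ is convex and Lipschitz continuous. The idea is to combine the smoothness of $F$ with the convexity of $G$ so as to turn the local optimality of $\bar u$ directly into a subgradient inclusion, and then to unravel this inclusion by means of the chain rule of Proposition \ref{P3.3}.

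First I would fix an arbitrary $w \in \BV\cap L^2(\omega)$ and, for $t\in(0,1)$, set $u_t = (1-t)\bar u + tw = \bar u + t(w - \bar u)$. Since $u_t \to \bar u$ in $\BV\cap L^2(\omega)$ as $t\searrow 0$, for $t$ small enough $u_t$ lies in the neighbourhood on which $\bar u$ is optimal, so $J(u_t) \ge J(\bar u)$. Using the convexity inequality $G(u_t) \le (1-t)G(\bar u) + tG(w)$, this yields
\[
0 \le \frac{F(u_t) - F(\bar u)}{t} + \alpha\big[G(w) - G(\bar u)\big].
\]
Letting $t\searrow 0$ and invoking the differentiability of $F$ in the direction $w - \bar u \in L^2(\omega)$ gives the variational inequality
\[
F'(\bar u)(w - \bar u) + \alpha\big[G(w) - G(\bar u)\big] \ge 0 \qquad \forall\, w \in \BV\cap L^2(\omega).
\]
Read against the definition of the convex subdifferential, this states precisely that $-\tfrac{1}{\alpha}F'(\bar u) \in \partial G(\bar u)$, the subgradient being understood as an element of $(\BV\cap L^2(\omega))^*$.

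It then remains to identify this subgradient. By Proposition \ref{P3.3}, $\partial G(\bar u) = \nabla^*\partial g(\nabla\bar u)$, so there exists $\bar\lambda \in \partial g(\nabla\bar u)$ with $-\tfrac1\alpha F'(\bar u) = \nabla^*\bar\lambda$; equivalently, after using the definition of $\nabla^*$, one has $F'(\bar u)v + \alpha\langle\bar\lambda,\nabla v\rangle_{[\mon]^*,\mon} = 0$ for every $v\in\BV\cap L^2(\omega)$. Substituting the expression \eqref{E2.9} for $F'(\bar u)v$, with $\bar\varphi := \varphi_{\bar u}$ the adjoint state of \eqref{E2.11}, produces exactly \eqref{E4.1}.

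The main point requiring care is the second step, namely the passage from local optimality to the global subgradient inclusion and the bookkeeping of the two function spaces involved. On the one hand $F$ is differentiable only as a functional on $L^2(\omega)$, so one must ensure that the tested directions $w - \bar u$ genuinely lie in $L^2(\omega)$, which is guaranteed by restricting to $w\in\BV\cap L^2(\omega)$; on the other hand $\partial G(\bar u)$ is computed in the $\BV$-duality of Proposition \ref{P3.3}, so one should check that the subgradient $-\tfrac1\alpha F'(\bar u)$, a priori an element of $(\BV\cap L^2(\omega))^*$, is compatible with that representation, which follows from the continuity of $\nabla:\BV\to\mon$ and of $g$. The convexity of $G$ is precisely what allows the localization to be removed, making the characterization both necessary and, given the structure of the problem, the natural first order condition.
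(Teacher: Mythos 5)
Your proposal is correct and follows essentially the same route as the paper's own proof: local optimality combined with the convexity of $G$ yields the variational inequality $F'(\bar u)(w-\bar u)+\alpha[G(w)-G(\bar u)]\ge 0$, which is read as the inclusion $-\tfrac1\alpha F'(\bar u)\in\partial G(\bar u)=\nabla^*\partial g(\nabla\bar u)$ via Proposition \ref{P3.3}, and then \eqref{E2.9} identifies the representative and gives \eqref{E4.1}. The only cosmetic difference is that you parametrize the segment as $u_t=(1-t)\bar u+tw$ while the paper writes $\bar u+\rho v$ and substitutes $v=u-\bar u$ afterwards; the underlying argument is identical.
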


\begin{proof}
Let us denote by  $\bar\varphi \in C(\bar\Omega) \cap H_0^1(\Omega)$ the adjoint state corresponding to the local solution $\bar u$. Given $v \in \BV\cap L^2(\omega)$, from the local optimality of $\bar u$ and the convexity of $G$ we deduce for every $0 < \rho < 1$ small enough
\begin{align*}
&0 \le \frac{J(\bar u + \rho v) - J(\bar u)}{\rho} = \frac{F(\bar u + \rho v) - F(\bar u)}{\rho} + \alpha\frac{G(\bar u + \rho v) - G(\bar u)}{\rho}\\
& \le  \frac{F(\bar u + \rho v) - F(\bar u)}{\rho} + \alpha[G(\bar u +v) - G(\bar u)].
\end{align*}
Passing to the limit as $\rho \to 0$ in the above inequality and using \eqref{E2.9} we obtain for every $v \in \BV$
\[
0 \le \int_\omega\Big(\bar\varphi(x) + \gamma\bar u(x) + \beta\int_\omega \bar u\, ds\Big)v(x)\, dx + \alpha[G(\bar u + v) - G(\bar u)].
\]
Replacing $v$ by $u - \bar u$, this inequality can be written
\[
-\frac{1}{\alpha}\int_\omega \Big(\bar\varphi + \gamma\bar u + \beta\int_\omega\bar u\, ds\Big)(u - \bar u)\, dx + G(\bar u) \le G(u) \quad \forall u \in \BV\cap L^2(\omega).
\]
This along with \eqref{E3.7} implies
\[
-\frac{1}{\alpha}\Big(\bar\varphi + \gamma\bar u + \beta\int_\omega\bar u\, ds\Big) \in \partial G(\bar u) = \nabla^*\partial g(\nabla\bar u).
\]
Hence, there exists $\bar\lambda \in \partial g(\nabla\bar u) \subset [\mon]^*$ such that
\[
\langle\bar\lambda,\nabla v\rangle_{[\mon]^*,\mon} = \frac{-1}{\alpha}\int_\omega\Big[\bar\varphi + \beta\int_\omega\bar u\, ds\Big]v\, dx \ \ \forall v \in \BV \cap L^2(\omega),
\]
which implies \eqref{E4.1}.
\end{proof}

Since $\bar\lambda \in \mon$ and $\mon$ is not a distribution space, sometimes it can be more convenient to handle a different optimality system involving distributional spaces, mainly if we think of the numerical analysis. To this end, we present the following equivalent optimality conditions.

\begin{theorem}
Let us assume that $n = 2$. Given $\bar u \in \BV$, let $\bar y$ and $\bar\varphi$ be the associated state and adjoint state. Then, there exists $\bar\lambda \in \partial g(\nabla\bar u)$ satisfying \eqref{E4.1} if and only if there exists $\bar\Phi \in \Cn$ such that
\begin{align}
&\alpha\langle\nabla v,\bar\Phi\rangle_{\mon,\Cn} + \int_\omega\Big[\bar\varphi + \gamma\bar u + \beta\int_\omega\bar u\, ds\Big]v\, dx = 0 \ \ \forall v \in \BV, \label{E4.2}\\
&\langle\nabla v,\bar\Phi\rangle_{\mon,\Cn} \le \|\nabla v\|_\mon\quad \forall v \in \mon, \label{E4.3}\\
&\langle\nabla\bar u,\bar\Phi\rangle_{\mon,\Cn} = \|\nabla\bar u\|_\mon. \label{E4.4}
\end{align}
\label{T4.3}
\end{theorem}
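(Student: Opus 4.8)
The plan is to recognize that conditions \eqref{E4.2}--\eqref{E4.4} assert nothing more than that $\bar\Phi$, viewed through the canonical embedding $\Cn\hookrightarrow[\mon]^*$, is a subgradient $\bar\Phi\in\partial g(\nabla\bar u)$ that happens to lie in the \emph{continuous} space $\Cn$ and satisfies \eqref{E4.1}, whereas Theorem \ref{T4.2} only furnishes an abstract subgradient $\bar\lambda\in[\mon]^*$. Thus one implication is essentially free, and the whole difficulty lies in upgrading an abstract multiplier to a continuous one. Accordingly I would split the proof into the two implications and concentrate all the work on the construction.

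For the implication ``$\bar\Phi\Rightarrow\bar\lambda$'' I set $\bar\lambda:=\bar\Phi$ under the embedding $\Cn\hookrightarrow[\mon]^*$. Then \eqref{E4.3} is precisely \eqref{E3.5} and \eqref{E4.4} is \eqref{E3.4}, so the characterization \eqref{E3.4}--\eqref{E3.5} yields $\bar\lambda\in\partial g(\nabla\bar u)$; moreover, since $n=2$ gives $\BV\hookrightarrow L^2(\omega)$ and hence $\BV\cap L^2(\omega)=\BV$, relation \eqref{E4.2} coincides with \eqref{E4.1}. This direction requires no construction and uses the hypothesis $n=2$ only through the embedding.

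For the converse I would write $\bar\psi:=\bar\varphi+\gamma\bar u+\beta\int_\omega\bar u\,dz$, which lies in $L^2(\omega)$ because $\bar\varphi\in C(\bar\Omega)$ and $\bar u\in\BV\hookrightarrow L^2(\omega)$ for $n=2$, so that \eqref{E4.1} reads $\langle\bar\lambda,\nabla v\rangle=-\frac1\alpha\int_\omega\bar\psi v\,dx$ for all $v\in\BV$. The goal is to produce $\bar\Phi\in\Cn$ such that (i) $\int_\omega\bar\Phi\,d(\nabla v)=-\frac1\alpha\int_\omega\bar\psi v\,dx$ for all $v\in\BV$, and (ii) $|\bar\Phi(x)|\le1$ on $\omega$. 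Property (i) gives \eqref{E4.2}; by duality with the definition $\|\nu\|_\mon=\sup\{\int_\omega z\,d\nu:z\in\Cn,\ |z|\le1\}$, property (ii) gives \eqref{E4.3}; and \eqref{E4.4} is then automatic, for evaluating (i) at $v=\bar u$ and using \eqref{E4.1} together with \eqref{E3.4} yields $\int_\omega\bar\Phi\,d(\nabla\bar u)=\langle\bar\lambda,\nabla\bar u\rangle=\|\nabla\bar u\|_\mon$. The natural route to (i) is to read it as the divergence equation $\div\bar\Phi=\frac1\alpha\bar\psi$ in $\omega$, and to locate $\bar\Phi$ through the support-function structure: from $\langle\bar\lambda,\nabla v\rangle\le\|\nabla v\|_\mon$ (a consequence of \eqref{E3.5}) and the variational definition $\|\nabla v\|_\mon=\sup\{\int_\omega\div z\,v\,dx:z\in C_0^\infty(\omega)^n,\ |z|\le1\}$, the linear form $v\mapsto-\frac1\alpha\int_\omega\bar\psi v$ is dominated by the support function of the unit ball of admissible fields, and a bipolar/separation argument supplies a representing field in that ball.

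I expect the genuine obstacle to be the \emph{continuity} of $\bar\Phi$, that is, obtaining a representative in $\Cn$ rather than merely in the bidual $[\mon]^*$. Since the unit ball of $\Cn$ is not weakly compact, a weak-$*$ limit of admissible fields returns only an element of $(\Cn)^{**}$---in effect the abstract $\bar\lambda$ again---so pure duality does not close the argument. The continuity must instead be extracted from the structure of the divergence equation $\div\bar\Phi=\frac1\alpha\bar\psi$ in two dimensions, where the regularity of $\bar\psi$ (continuity of $\bar\varphi$ together with $\bar u\in\BV\hookrightarrow L^2(\omega)$) and two-dimensional estimates are exploited to place $\bar\Phi$ in $\Cn$, while the pointwise bound $|\bar\Phi|\le1$ is enforced simultaneously via the subgradient relations \eqref{E3.4}--\eqref{E3.5} carried by $\bar\lambda$. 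Securing continuity together with the pointwise constraint is the delicate step, and it is precisely here that the hypothesis $n=2$ is indispensable.
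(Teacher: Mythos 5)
Your proposal contains a genuine gap, and it stems from a misreading of \eqref{E4.3}. You interpret \eqref{E4.3} as $\langle\nu,\bar\Phi\rangle_{\mon,\Cn}\le\|\nu\|_\mon$ for \emph{all} measures $\nu\in\mon$, which together with \eqref{E4.4} would say precisely that $\bar\Phi\in\Cn\cap\partial g(\nabla\bar u)$, i.e.\ $\|\bar\Phi\|_\Cn\le 1$. But Remark \ref{R4.2} of the paper states explicitly that Theorem \ref{T4.3} does \emph{not} guarantee $\|\bar\Phi\|_\Cn\le 1$, and that producing a continuous subgradient is left as an open question; in the statement actually proved, \eqref{E4.3} is asserted only on the subspace of gradients, $\langle\nabla v,\bar\Phi\rangle\le\|\nabla v\|_\mon$ for $v\in\BV$ (the quantifier ``$\forall v\in\mon$'' is a typo). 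This misreading damages both of your directions. Your ``hard direction'' aims at a strictly stronger conclusion --- a continuous field with $|\bar\Phi(x)|\le 1$ everywhere on $\omega$ --- which the authors themselves cannot establish; and your sketch for it (solving $\div\bar\Phi=\frac{1}{\alpha}\bar\psi$ with two-dimensional elliptic estimates plus a bipolar/separation argument) is never carried out: you concede the decisive step is missing. Your ``easy direction'' is also no longer free under the correct reading: if \eqref{E4.3} holds only over gradients, then $\bar\Phi$ viewed in $[\mon]^*$ need not satisfy \eqref{E3.5}, so it need not be a subgradient. The paper instead restricts the functional $\mu\mapsto\langle\mu,\bar\Phi\rangle$ to $D(T_0)=\{\nabla v : v\in\BV\}$, where \eqref{E4.3} gives norm $\le 1$, and invokes the Hahn--Banach theorem to obtain an extension $\bar\lambda\in[\mon]^*$ with $\|\bar\lambda\|_{[\mon]^*}\le 1$; in general $\bar\lambda$ is a different functional from $\bar\Phi$.

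The idea you are missing in the forward direction is that continuity of $\bar\Phi$ comes not from PDE regularity but from duality in the weak$^*$ topology. Define $T_0$ on $D(T_0)$ by $T_0(\nabla v)=\langle\bar\lambda,\nabla v\rangle_{[\mon]^*,\mon}$. If $\nabla v_k\stackrel{*}{\rightharpoonup}\nabla v$ in $\mon$, normalize $v_k$ and $v$ to have zero mean; then \eqref{E2.1} bounds $\{v_k\}_k$ in $\BV$, and the embedding $\BV\subset L^2(\omega)$ --- valid precisely because $n=2$ --- yields $v_k\rightharpoonup v$ in $L^2(\omega)$, so by \eqref{E4.1} the values $T_0(\nabla v_k)=-\frac{1}{\alpha}\int_\omega\big[\bar\varphi+\gamma\bar u+\beta\int_\omega\bar u\,ds\big]v_k\,dx$ converge to $T_0(\nabla v)$. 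Thus $T_0$ is weak$^*$ continuous on its domain; the Hahn--Banach theorem for locally convex spaces extends it to a weak$^*$ continuous functional $T$ on all of $\mon$, and every such functional is represented by an element $\bar\Phi$ of the predual $\Cn$. This mechanism explains exactly why the theorem is stated the way it is: the weak$^*$ Hahn--Banach extension preserves continuity but not the bound $T(\mu)\le\|\mu\|_\mon$, which therefore survives only on gradient directions \eqref{E4.3}--\eqref{E4.4}. The hypothesis $n=2$ enters through this weak$^*$ continuity argument, not through two-dimensional regularity of a divergence equation as you conjectured.
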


\begin{proof}
Assume that $\bar\lambda \in \partial g(\nabla\bar u)$ satisfies \eqref{E4.1}. We define a linear form $T_0$ in $\mon$  as follows
\[
D(T_0) = \{\nabla v : v \in \BV\}\ \text{ and }\ T_0(\mu) = \langle\bar\lambda,\nabla v\rangle_{[\mon]^*,\mon} \ \text{ if } \mu = \nabla v.
\]
From \eqref{E3.4} and \eqref{E3.5} we have
\begin{align}
&T_0(\nabla\bar u) =\|\nabla\bar u\|_\mon,\label{E4.5}\\
&T_0(\mu) \le \|\mu\|_\mon \quad \forall \mu \in D(T_0).\label{E4.6}
\end{align}
We prove that $T_0$ is weakly$^*$ continuous on its domain. Let $\{\mu_k\}_k \subset D(T_0)$ and $\mu \in D(T_0)$ be such that $\mu_k \stackrel{*}{\rightharpoonup}\mu$ in $\mon$. By definition of $D(T_0)$ there exists elements $\{v_k\}_k \subset \BV$ and $v \in \BV$ such that $\mu_k = \nabla v_k$ and $\mu = \nabla v$. Without loss of generality we assume that the integrals of each $v_k$ and $v$ in $\omega$ are zero. Using \eqref{E2.1}, we know that $\{v_k\}_k$ is bounded in $\BV$. From the continuity of the embedding $\BV \subset L^2(\omega)$ due to $n = 2$ and the convergence $\nabla v_k \stackrel{*}{\rightharpoonup}\nabla v$ in $\mon$, we obtain that $v_k \rightharpoonup v$ in $L^2(\omega)$. Therefore, we get with \eqref{E4.1}
\begin{align}
&\lim_{k \to \infty}T_0(\mu_k) = \lim_{k \to \infty}\langle\bar\lambda,\nabla v_k\rangle_{[\mon]^*,\mon}\notag\\
& = \lim_{k \to \infty}\frac{-1}{\alpha}\int_\omega\Big[\bar\varphi + \gamma\bar u + \beta\int_\omega\bar u\, ds\Big]v_k\, dx\label{E4.7}\\
& = \frac{-1}{\alpha}\int_\omega\Big[\bar\varphi + \gamma\bar u + \beta\int_\omega\bar u\, ds\Big]v\, dx = \langle\bar\lambda,\nabla v\rangle_{[\mon]^*,\mon} = T_0(\mu),\notag
\end{align}
which implies the weak$^*$ continuity of $T_0$. Hence, there exists a weakly$^*$ continuous linear form $T:\mon \longrightarrow \mathbb{R}$ extending $T_0$; \cite[Theorem 3.6]{Rudin73}. In this case, we know that $T$ can be identified with an element $\bar\Phi \in \Cn$, i.e.
\[
T(\mu) = \langle\mu,\bar\Phi\rangle_{\mon,\Cn} = \int_\omega\bar\Phi\, d\mu\quad  \forall \mu \in \mon;
\]
see \cite[Proposition 3.14]{Brezis2011}. The function $\bar\Phi$ fulfills \eqref{E4.2}--\eqref{E4.4}. Indeed, \eqref{E4.2} follows from the definition of $T_0$ and \eqref{E4.1}, and \eqref{E4.3}-\eqref{E4.4} are the same as \eqref{E4.5}-\eqref{E4.6}.

Reciprocally, assume that $\bar\Phi \in \Cn$ satisfies \eqref{E4.2}--\eqref{E4.4}. This time we define the linear operator
\[
D(T_0) = \{\nabla v : v \in \BV\}\ \text{ and }\ T_0(\mu) = \langle\nabla v,\bar\Phi\rangle_{\mon,\Cn} \ \text{ if } \mu = \nabla v.
\]
From \eqref{E4.3} we know that $T_0$ is a continuous operator in $D(T_0) $ for the strong topology of $\mon$, and $\|T_0\|_{[\mon]*} \le 1$. Hence, the Hahn-Banach theorem implies the existence of an operator $\bar\lambda \in [\mon]*$ extending $T_0$ and such that $\|\bar\lambda\|_{[\mon]*} \le 1$. This along with \eqref{E4.3} implies that
\begin{align*}
&\langle\bar\lambda,\nabla\bar u\rangle = \|\nabla\bar u\|_\mon,\\
&\langle\bar\lambda,\nu\rangle \le \|\nu\|_\mon\ \ \forall \nu \in \mon.
\end{align*}
Hence, we have $\bar\lambda \in \partial g(\nabla\bar u)$; see \eqref{E3.3}--\eqref{E3.5}. Finally, \eqref{E4.1} follows from \eqref{E4.2} and the definition of $T_0$. This concludes the proof.
\end{proof}

\begin{remark}
Theorem \ref{T4.3} is still valid in dimension $n = 3$ if we take $\gamma = 0$ and we assume that the nonlinearity of $f(x,y)$ has a polynomial growth of arbitrary order with respect to the variable $y$; see Remark \ref{R2.1}. Indeed, let us observe that the limit \eqref{E4.8} is still valid because $v_k \rightharpoonup v$ in $L^{3/2}(\Omega)$ and $\bar\varphi + \beta\int_\omega\bar u\, ds$  is a continuous function in $\bar\Omega$.
\label{R4.1}
\end{remark}

\begin{remark}
It would be interesting to prove the existence of a function $\bar\Phi \in \Cn \cap \partial g(\nabla\bar u)$ satisfying \eqref{E4.3}--\eqref{E4.5}. Indeed, Theorem \ref{T4.3} does not guarantee that $\|\Phi\|_\Cn \le 1$. In this hypothetic case, we could deduce from  Propositions \ref{P3.1} and \ref{P3.4} the following sparsity structure of $\nabla\bar u$.
\begin{enumerate}
\item For the $|\cdot|_2$ norm, if $\nabla\bar u \neq 0$ we have $\|\bar\Phi\|_\Cn = 1$ and
\[
{\rm supp}(\nabla\bar u) \subset \{x \in \omega : |\bar\Phi(x)|_2 = 1\}.
\]
\item For the $|\cdot|_\infty$ norm, for any $1 \le j \le n$ such that if $\partial_{x_j}\bar u \neq 0$ we have $\|\bar\Phi_j\|_\C = 1$, and
\[
\begin{array}{l}\displaystyle
{\rm supp}([\partial_{x_j} u]^+) \subset \{x \in \omega : \bar\Phi_j(x) = +1\},\vspace{2mm}\\
\displaystyle {\rm supp}([\partial_{x_j}\bar u]^-) \subset \{x \in \omega : \bar\Phi_j(x) = -1\}.
\end{array}
\]
\end{enumerate}
\label{R4.2}
\end{remark}

\section{Second order optimality conditions}
\label{S5}\setcounter{equation}{0}

The goal of this section is to prove necessary and sufficient second order optimality conditions for problem \Pb. In the whole section, $\bar u$ will denote a fixed element of $\BV \cap L^2(\omega)$ satisfying the optimality conditions given in Theorem \ref{T4.2}. As in Section \ref{S3}, we will distinguish the cases where the norms $|\cdot|_2$ and $|\cdot|_\infty$ in $\mathbb{R}^n$ are used in the definition of the measure $\|\nabla u\|_\mon$.

\subsection{The use of the $|\cdot|_\infty$ norm}
\label{S5.1}
As pointed out in \eqref{E3.1}, the use of the $|\cdot|_\infty$ norm in $\mathbb{R}^n$ leads to the identity
\begin{equation}
\|\nabla v\|_\mon = \sum_{j = 1}^n\|\partial_{x_j} v\|_\mo = \sum_{j = 1}^n\Big\{\int_\omega|h_{v,j}|\, d|\partial_{x_j}\bar u| + \|(\partial_{x_j} v)_s\|_\mo\Big\}
\label{E5.1}
\end{equation}
$\forall v \in \BV$, where $\partial_{x_j}v = h_{v,j}|\partial_{x_j}\bar u| + (\partial_{x_j}v)_s$ is the Lebesgue decomposition of $\partial_{x_j}v$ with respect to the measure $|\partial_{x_j}\bar u|$, $1 \le j \le n$. Moreover, for every $1 \le j \le n$ there exists a Borel function $\bar h_j$ such that
\begin{equation}
|\bar h_j(x)| = 1,\ \ |\partial_{x_j}\bar u|\!-\!\text{a.e.}, \text{ and } \partial_{x_j}\bar u = \bar h_j|\partial_{x_j}\bar u|.
\label{E5.2}
\end{equation}
In the sequel, we will denote $h_v = (h_{v,1},\ldots,h_{v,n})$ and $\bar h = (\bar h_1,\ldots,\bar h_n)$.

First, we state the second order necessary optimality conditions.  To this end we define the cone of critical directions $C_{\bar u}$ as the closure in $L^2(\omega)$ of the cone
\begin{align}
E_{\bar u} &= \{v \in \BV\cap L^2(\omega) : F'(\bar u)v + \alpha G'(\bar u;v) = 0\notag\\
&\qquad\text{and } h_{v,j} \in L^2(|\partial_{x_j}\bar u|), \ 1 \le j \le n\}.
\label{E5.3}
\end{align}
Then, we have the following result.

\begin{theorem}
If $\bar u$ is a local minimum of \Pb, then $F''(\bar u)v^2 \ge 0$ $\forall v \in C_{\bar u}$.
\label{T5.1}
\end{theorem}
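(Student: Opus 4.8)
The plan is to establish the inequality first for directions in the generating cone $E_{\bar u}$ and then pass to its $L^2(\omega)$-closure $C_{\bar u}$. For the density step I would invoke Proposition \ref{P2.3}: since $F:L^2(\omega)\to\mathbb{R}$ is of class $C^2$, the quadratic form $v\mapsto F''(\bar u)v^2$ is continuous for the strong topology of $L^2(\omega)$. Hence if $v_k\to v$ in $L^2(\omega)$ with $v_k\in E_{\bar u}$ and $F''(\bar u)v_k^2\ge 0$ for all $k$, the inequality is preserved in the limit. It therefore suffices to prove $F''(\bar u)v^2\ge 0$ for each fixed $v\in E_{\bar u}$.

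Fix such a $v$. From the local optimality of $\bar u$ one has, for all small $\rho>0$,
\[
0 \le J(\bar u + \rho v) - J(\bar u) = \big[F(\bar u + \rho v) - F(\bar u)\big] + \alpha\big[G(\bar u + \rho v) - G(\bar u)\big].
\]
The smooth part expands by Taylor's formula as $\rho F'(\bar u)v + \tfrac{\rho^2}{2}F''(\bar u)v^2 + o(\rho^2)$ using \eqref{E2.9}--\eqref{E2.10}. For the nonsmooth part I would exploit the additive structure \eqref{E5.1}. Writing $\partial_{x_j}v = h_{v,j}|\partial_{x_j}\bar u| + (\partial_{x_j}v)_s$ and $\partial_{x_j}\bar u = \bar h_j|\partial_{x_j}\bar u|$ with $|\bar h_j|=1$ as in \eqref{E5.2}, the measures $(\bar h_j + \rho h_{v,j})|\partial_{x_j}\bar u|$ and $\rho(\partial_{x_j}v)_s$ are mutually singular, so that
\[
\|\partial_{x_j}\bar u + \rho\,\partial_{x_j}v\|_\mo = \int_\omega|\bar h_j + \rho h_{v,j}|\,d|\partial_{x_j}\bar u| + \rho\|(\partial_{x_j}v)_s\|_\mo .
\]
Since $\bar h_j=\pm1$, the pointwise identity $|\bar h_j + \rho h_{v,j}| = (1 + \rho\bar h_j h_{v,j}) + 2(1 + \rho\bar h_j h_{v,j})^-$ holds $|\partial_{x_j}\bar u|$-a.e.; summing over $j$ and comparing with \eqref{E3.12} gives the exact expansion $G(\bar u + \rho v) = G(\bar u) + \rho\,G'(\bar u;v) + R(\rho)$, with the nonnegative remainder $R(\rho) = 2\sum_{j=1}^n\int_\omega(1 + \rho\bar h_j h_{v,j})^-\,d|\partial_{x_j}\bar u| \ge 0$.

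The crux, and the main obstacle, is to show $R(\rho)=o(\rho^2)$; this is precisely where the requirement $h_{v,j}\in L^2(|\partial_{x_j}\bar u|)$ in the definition \eqref{E5.3} of $E_{\bar u}$ is used. The integrand is supported on $A_{j,\rho} = \{x:|h_{v,j}(x)|>1/\rho\}$, and there $(1+\rho\bar h_j h_{v,j})^- = \rho|h_{v,j}| - 1 \le \rho|h_{v,j}| \le \rho^2|h_{v,j}|^2$, so $R(\rho) \le 2\rho^2\sum_j\int_{A_{j,\rho}}|h_{v,j}|^2\,d|\partial_{x_j}\bar u|$. As $\rho\searrow0$ one has $|\partial_{x_j}\bar u|(A_{j,\rho})\to0$, and since $h_{v,j}$ is square integrable, absolute continuity of the integral yields $\int_{A_{j,\rho}}|h_{v,j}|^2\,d|\partial_{x_j}\bar u|\to0$, hence $R(\rho)/\rho^2\to0$. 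Collecting the terms, the $\rho$-order contribution equals $\rho\big(F'(\bar u)v + \alpha G'(\bar u;v)\big)=0$ by the criticality of $v$, while everything beyond $\tfrac{\rho^2}{2}F''(\bar u)v^2$ is $o(\rho^2)$; dividing by $\rho^2$ and letting $\rho\searrow0$ gives $F''(\bar u)v^2\ge 0$. I emphasize that without the $L^2$ integrability of the densities $h_{v,j}$ the same argument only yields $R(\rho)=o(\rho)$, which would be insufficient for a second-order conclusion.
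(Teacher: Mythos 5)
Your proof is correct and follows essentially the same strategy as the paper's: reduction to $E_{\bar u}$ via the $L^2(\omega)$-continuity of the quadratic form, linearization of $|\bar h_j + \rho h_{v,j}|$ using $|\bar h_j|=1$, and control of the remainder through the $L^2(|\partial_{x_j}\bar u|)$-integrability of $h_{v,j}$ on the shrinking set where $\rho|h_{v,j}|$ is large. The only difference is tactical: you derive an exact expansion with nonnegative remainder $R(\rho)=2\sum_{j=1}^n\int_\omega(1+\rho\bar h_jh_{v,j})^-\,d|\partial_{x_j}\bar u|$ and bound it pointwise by $2\rho^2|h_{v,j}|^2$, whereas the paper splits $\omega$ at $\rho|h_{v,j}|\le\frac12$ and applies the Cauchy--Schwarz inequality twice, both yielding the same $o(\rho^2)$ estimate (note only that your displayed equality $(1+\rho\bar h_jh_{v,j})^-=\rho|h_{v,j}|-1$ should be an inequality $\le$, which is all your subsequent chain uses).
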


\begin{proof}
We will prove the result for every $v \in E_{\bar u}$. Then, the theorem follows by using the continuity of quadratic from $v \in L^2(\omega) \to F''(\bar u)v^2 \in \mathbb{R}$. Given $v \in E_{\bar u}$ and $\rho > 0$ we set
\[
\omega_{\rho,j} = \{x \in \omega : \rho|h_{v,j}(x)| \le \frac{1}{2}\}\quad 1 \le j \le n.
\]

We have with Schwarz inequality
\begin{align*}
&|\partial_{x_j}\bar u|(\omega \setminus \omega_{\rho,j}) \le 2\rho\int_{\omega \setminus \omega_{\rho,j}}|h_{v,j}(x)|\, d|\partial_{x_j}\bar u|\\
&\le 2\rho\sqrt{|\partial_{x_j}\bar u|(\omega \setminus \omega_{\rho,j})}\Big(\int_{\omega \setminus \omega_{\rho,j}}|h_{v,j}(x)|^2\, d|\partial_{x_j}\bar u|\Big)^{1/2},
\end{align*}
which implies
\begin{equation}
\sqrt{|\partial_{x_j}\bar u|(\omega \setminus \omega_{\rho,j})} \le 2\rho\Big(\int_{\omega \setminus \omega_{\rho,j}}|h_{v.j}(x)|^2\, d|\partial_{x_j}\bar u|\Big)^{1/2}\ \ 1 \le j \le n.
\label{E5.4}
\end{equation}

Taking into account \eqref{E5.2} we get for $1 \le j \le n$
\[
\frac{|\bar h_j(x) + \rho h_{v,j}(x)| - |\bar h_j(x)|}{\rho} = h_{v,j}(x) \bar h_j(x)\ \ [|\partial_{x_j}\bar u|]\!-\!\text{a.e.} \ x \in \omega_{\rho,j}.
\]

Using this identity and \eqref{E5.1} we get
\begin{align*}
&\frac{G(\bar u + \rho v) - G(\bar u)}{\rho}\\
& = \sum_{j = 1}^n\Big\{\int_{\omega_{\rho,j}}\frac{|\bar h_j + \rho h_{v,j}| - |\bar h_j|}{\rho}d|\partial_{x_j}\bar u| + \|(\partial_{x_j} v)_s\|_\mon\Big\}\\
& + \sum_{j = 1}^n\int_{\omega \setminus \omega_{\rho,j}}\frac{|\bar h_j + \rho h_{v,j}| - |\bar h|}{\rho}d|\partial_{x_j}\bar u|\\
&= \sum_{j = 1}^n\Big\{\int_{\omega_{\rho,j}}(h_{v,j}\bar h_j)\,d|\partial_{x_j}\bar u| + \|(\partial_{x_j} v)_s\|_\mon\Big\}\\
& + \sum_{j = 1}^n\int_{\omega \setminus \omega_{\rho,j}}\frac{|\bar h_j + \rho h_{v,j}| - |\bar h_j|}{\rho}d|\partial_{x_j}\bar u|\\
&= \sum_{j = 1}^n\Big\{\int_\omega h_{v,j}\,d\partial_{x_j}\bar u + \|(\partial_{x_j} v)_s\|_\mon\Big\}\\
&+ \sum_{j = 1}^n\Big\{\int_{\omega \setminus \omega_{\rho,j}}\frac{|\bar h_j + \rho h_{v,j}| - |\bar h_j|}{\rho}d|\partial_{x_j}\bar u| - \int_{\omega \setminus \omega_{\rho,j}}(h_{v,j} \bar h_j)\,d|\partial_{x_j}\bar u|\Big\}.
\end{align*}
Now, from \eqref{E3.12}, \eqref{E5.2},  Schwarz inequality, and \eqref{E5.4} we infer
\begin{align*}
&\frac{G(\bar u + \rho v) - G(\bar u)}{\rho} \le G'(\bar u;v) + 2\sum_{j = 1}^n\int_{\omega \setminus \omega_{\rho,j}}|h_{v,j}|d|\partial_{x_j}\bar u|\\
&\le G'(\bar u;v) + 2\sum_{j = 1}^n\sqrt{|\partial_{x_j}\bar u|(\omega \setminus \omega_{\rho,j})}\Big(\int_{\omega \setminus \omega_{\rho,j}}|h_{v,j}(x)|^2\, d|\partial_{x_j}\bar u|\Big)^{1/2}\\
&\le G'(\bar u;v) + 4\rho\sum_{j = 1}^n\int_{\omega \setminus \omega_{\rho,j}}|h_{v,j}(x)|^2\, d|\partial_{x_j}\bar u|.
\end{align*}

Next we use the local optimality of $\bar u$. By a Taylor expansion of $F$ around $\bar u$ and using that $v \in E_{\bar u}$, we get for $\rho > 0$ small enough
\begin{align*}
&0 \le J(\bar u + \rho v) - J(\bar u) = \rho[F'(\bar u)v + \alpha G'(\bar u;v)]\\
& + \frac{\rho^2}{2}\Big(F''(\bar u + \theta_\rho v)v^2 + 8\alpha\sum_{j = 1}^n\int_{\omega \setminus \omega_{\rho,j}}|h_{v,j}(x)|^2\, d|\partial_{x_j}\bar u|\Big)\\
&= \frac{\rho^2}{2}\Big(F''(\bar u + \theta_\rho v)v^2 + 8\alpha\sum_{j = 1}^n\int_{\omega \setminus \omega_{\rho,j}}|h_{v,j}(x)|^2\, d|\partial_{x_j}\bar u|\Big)
\end{align*}
with $0 \le \theta_\rho \le 1$. Dividing the above expression by $\rho^2/2$, passing to the limit as $\rho \to 0$, and taking into account that $h_{v.j} \in L^2(|\partial_{x_j}\bar u|)$ and $|\partial_{x_j}\bar u|(\omega \setminus \omega_{\rho,j}) \to 0$, we conclude that $F''(\bar u)v^2 \ge 0$.
\end{proof}

For the sufficient second order conditions we introduce the critical cones
\begin{equation}
C_{\bar u}^\tau=\{v \in \BV\cap L^2(\omega) : F'(\bar u)v + \alpha G'(\bar u;v) \le \tau\|z_v\|_{L^2(\Omega)}\},
\label{E5.5}
\end{equation}
where $\tau > 0$ and $z_v = S'(\bar u)v$. The reader is referred to \cite{Casas2012} for some second order conditions based on these cones; see also \cite{Casas-Troltzsch2014A} and \cite{Casas-Troltzsch2015}. Let us observe that \eqref{E4.1} and the inequality $G'(\bar u;v) \ge \langle\bar\lambda,\nabla v\rangle_{[\mon]^*,\mon}$ imply that $\forall v \in \BV\cap L^2(\omega)$
\begin{equation}
F'(\bar u)v + \alpha G'(\bar u;v) \ge F'(\bar u)v + \alpha\langle\bar\lambda,\nabla v\rangle_{[\mon]^*,\mon} = 0.
\label{E5.6}
\end{equation}

\begin{theorem}
Let $\bar u \in \BV \cap L^2(\omega)$ satisfy the first order optimality conditions stated in Theorem \ref{T4.2} and the second order condition
\begin{equation}
\exists \delta > 0 \text{ and } \exists \tau > 0 : F''(\bar u)v^2 \ge \delta\|z_v\|^2_{L^2(\Omega)}\ \ \forall v \in C^\tau_{\bar u}.
\label{E5.7}
\end{equation}
Then, there exist $\kappa > 0$ and $\varepsilon> 0$ such that
\begin{equation}
J(\bar u) + \frac{\kappa}{2}\|y_u - \bar y\|^2_{L^2(\omega)} \le J(u)\quad \forall u \in \BV \cap L^2(\omega) : \|u - \bar u\|_{L^2(\omega)} \le \varepsilon,
\label{E5.8}
\end{equation}
where $y_u = S(u)$ and $\bar y = S(\bar u)$.
\label{T5.2}
\end{theorem}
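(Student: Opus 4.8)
The plan is to argue by contradiction, following the standard scheme for second order sufficiency in which coercivity is read off in the state norm. Suppose \eqref{E5.8} fails for every $\kappa$ and $\varepsilon$. Choosing $\kappa=\varepsilon=\frac1k$ produces a sequence $\{u_k\}\subset\BV\cap L^2(\omega)$ with $\rho_k:=\|u_k-\bar u\|_{L^2(\omega)}\to0$ and $J(u_k)<J(\bar u)+\frac1{2k}\|y_{u_k}-\bar y\|^2_{L^2(\omega)}$. I set $v_k=(u_k-\bar u)/\rho_k$, so that $\|v_k\|_{L^2(\omega)}=1$, and write $z_k=z_{v_k}=S'(\bar u)v_k$. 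Since $S$ is of class $C^2$ (Proposition~\ref{P2.2}), $\|y_{u_k}-\bar y\|_{L^2(\omega)}\le\rho_k\|z_k\|_{L^2(\Omega)}+O(\rho_k^2)$ and, because $\|v_k\|_{L^2(\omega)}=1$, the family $\{z_k\}$ is bounded in $L^2(\Omega)$.

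The next step extracts the usable inequalities. A second order Taylor expansion $F(u_k)=F(\bar u)+\rho_kF'(\bar u)v_k+\frac{\rho_k^2}{2}F''(\hat u_k)v_k^2$, with $\hat u_k=\bar u+\theta_k\rho_kv_k$ and $\theta_k\in[0,1]$, together with the convexity estimate $G(u_k)-G(\bar u)\ge\rho_kG'(\bar u;v_k)$ and the first order relation \eqref{E5.6} (which gives $F'(\bar u)v_k+\alpha G'(\bar u;v_k)\ge0$), yields after dividing the contradiction inequality by $\rho_k^2/2$
\[
\frac{2}{\rho_k}\bigl[F'(\bar u)v_k+\alpha G'(\bar u;v_k)\bigr]+F''(\hat u_k)v_k^2<\frac1k\|z_k\|^2_{L^2(\Omega)}+o(1).
\]
Since the bracket is nonnegative and $\{z_k\}$ is bounded, I read off two facts: $\limsup_k F''(\hat u_k)v_k^2\le0$, and $F'(\bar u)v_k+\alpha G'(\bar u;v_k)\to0$. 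Moreover $\hat u_k\to\bar u$ in $L^2(\omega)$, hence $y_{\hat u_k}\to\bar y$ and $\varphi_{\hat u_k}\to\bar\varphi$ uniformly; together with \eqref{E2.5} this makes the coefficient $1-\varphi_{\hat u_k}\frac{\partial^2f}{\partial y^2}(\cdot,y_{\hat u_k})$ converge to $1-\bar\varphi\frac{\partial^2f}{\partial y^2}(\cdot,\bar y)$ in $L^\infty(\Omega)$, so that $F''(\hat u_k)v_k^2=F''(\bar u)v_k^2+o(1)$ by \eqref{E2.10} and the boundedness of $\{z_k\}$.

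Now I split according to $c:=\liminf_k\|z_k\|_{L^2(\Omega)}$, passing to a subsequence realizing the liminf. If $c>0$, then for $k$ large $F'(\bar u)v_k+\alpha G'(\bar u;v_k)<\tau\|z_k\|_{L^2(\Omega)}$ (the left side tends to $0$, the right to $\tau c>0$), so $v_k\in C^\tau_{\bar u}$ and \eqref{E5.7} gives $F''(\bar u)v_k^2\ge\delta\|z_k\|^2_{L^2(\Omega)}\to\delta c^2>0$; combined with $F''(\hat u_k)v_k^2=F''(\bar u)v_k^2+o(1)$ and $\limsup_kF''(\hat u_k)v_k^2\le0$ this is a contradiction. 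If $c=0$, then $z_k\to0$ in $L^2(\Omega)$, and from \eqref{E2.10}, the boundedness of the coefficient, and $\|v_k\|_{L^2(\omega)}=1$ I get $F''(\bar u)v_k^2\ge\gamma-o(1)$ (discarding the nonnegative $\beta$-term); with $\limsup_kF''(\hat u_k)v_k^2\le0$ this forces $\gamma\le0$. Under \eqref{E1.2} this already settles the case $n=3$, and it also settles $n=2$ whenever $\gamma>0$.

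The remaining, and genuinely delicate, case is $\gamma=0$ (hence $n=2$) with $z_k\to0$, which I expect to be the main obstacle. Here $F''$ no longer controls $\|v_k\|_{L^2(\omega)}$, so the normalization $\|v_k\|_{L^2(\omega)}=1$ must be contradicted through the $\BV$ structure alone: the directions $v_k$ are bounded in $L^2(\omega)$ but not in $\BV$, no $L^2$‑compactness is available, and the weak $L^2$ limit is the trivial direction $v=0$ (because $z_v=0$ forces $v=0$ via \eqref{E2.7}), which carries no information at the borderline embedding $\BV\hookrightarrow L^{n/(n-1)}=L^2$ where concentration is possible. My plan for this case is to combine the strengthened first order defect $F'(\bar u)v_k+\alpha G'(\bar u;v_k)=o(\rho_k)$ (readable from the displayed inequality once $\|z_k\|_{L^2(\Omega)}\to0$) with the Poincar\'e‑type estimate behind \eqref{E2.1} (from \cite[Theorem~3.44]{AFP2000}), namely $\|v_k-a_{v_k}\|_{L^2(\omega)}\le C\|\nabla v_k\|_\mon$ with $a_{v_k}\to0$, to show that the near‑vanishing of the Bregman gap $G'(\bar u;v_k)-\langle\bar\lambda,\nabla v_k\rangle$ is incompatible with $\|v_k\|_{L^2(\omega)}=1$. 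Controlling the singular part of $\nabla v_k$ and the alignment of its absolutely continuous density with $\bar h$ in this borderline regime is where the real work lies, and it is precisely the point at which the fine properties of $\BV$ and the chosen vector norm enter the argument.
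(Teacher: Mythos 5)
Your contradiction argument is sound through the two easy cases (the subsequence with $\liminf_k\|z_k\|_{L^2(\Omega)}>0$, and the case $\gamma>0$), but it has a genuine gap exactly where you flag it, and the plan you sketch for that gap cannot work. The remaining case $\gamma=0$ (permitted by \eqref{E1.2} when $n=2$) with $z_k\to0$ is not a technicality: by \eqref{E4.1} the first order defect is $F'(\bar u)v+\alpha G'(\bar u;v)=\alpha\bigl[G'(\bar u;v)-\langle\bar\lambda,\nabla v\rangle\bigr]$, and this Bregman gap vanishes on nontrivial unit directions. For instance $v=\bar u/\|\bar u\|_{L^2(\omega)}$ (if $\bar u\neq0$) has zero gap, since $G'(\bar u;\bar u)=G(\bar u)=\langle\bar\lambda,\nabla\bar u\rangle$ by \eqref{E3.4}; more generally the whole critical cone consists of such directions, and second order conditions are needed precisely because it is nontrivial. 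Hence ``near-vanishing of the Bregman gap'' is perfectly compatible with $\|v_k\|_{L^2(\omega)}=1$, and no Poincar\'e or alignment argument can contradict the normalization. The deeper mismatch is that \eqref{E5.8} asserts quadratic growth in the \emph{state}, not the control, so the hypotheses genuinely admit unit-norm directions $v$ with arbitrarily small $z_v$; contradicting $\|v_k\|_{L^2(\omega)}=1$ is the wrong target (that is what Theorem \ref{T5.3} does, and it needs $\gamma>0$ for exactly this reason).

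The repair is to replace your dichotomy on $\liminf_k\|z_k\|_{L^2(\Omega)}$ by the paper's dichotomy on whether $u_k-\bar u\in C^\tau_{\bar u}$; this closes every case uniformly in $\gamma\ge0$ and in fact makes the contradiction framework unnecessary. If $u_k-\bar u\in C^\tau_{\bar u}$, then \eqref{E5.7} together with the perturbation estimate \eqref{E5.9} gives $J(u_k)-J(\bar u)\ge\frac{\delta}{4}\|z_{u_k-\bar u}\|^2_{L^2(\Omega)}$. If $u_k-\bar u\notin C^\tau_{\bar u}$, the surplus $F'(\bar u)(u_k-\bar u)+\alpha G'(\bar u;u_k-\bar u)>\tau\|z_{u_k-\bar u}\|_{L^2(\Omega)}$, combined with the lower bound \eqref{E5.11} (whose $\gamma$-term is simply discarded, so $\gamma=0$ causes no difficulty) and the smallness $\|z_{u_k-\bar u}\|_{L^2(\Omega)}\le 2\tau/(\delta+C_2)$, which follows from $\|u_k-\bar u\|_{L^2(\omega)}\to0$, gives $J(u_k)-J(\bar u)\ge\frac{\delta}{2}\|z_{u_k-\bar u}\|^2_{L^2(\Omega)}$. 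Since moreover $\frac12\|y_{u_k}-\bar y\|_{L^2(\Omega)}\le\|z_{u_k-\bar u}\|_{L^2(\Omega)}$ for $\|u_k-\bar u\|_{L^2(\omega)}$ small, either lower bound contradicts your hypothesis $J(u_k)-J(\bar u)<\frac{1}{2k}\|y_{u_k}-\bar y\|^2_{L^2(\Omega)}\le\frac{2}{k}\|z_{u_k-\bar u}\|^2_{L^2(\Omega)}$ as soon as $k>8/\delta$, using $z_{u_k-\bar u}\neq0$ (which, as you correctly note, follows from \eqref{E2.7} since $u_k\neq\bar u$). This is the paper's direct two-case proof read contrapositively; run forward it yields \eqref{E5.8} with $\kappa=\delta/8$, with no case distinction on $\gamma$ at all.
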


\begin{proof}
We follow the proof of \cite[Theorem 3.6]{Casas2012} with some changes. First, from \cite[Lemma 2.7]{Casas2012} we deduce the existence of $\varepsilon_0 > 0$ such that
\begin{equation}
|[F''(u) - F''(\bar u)]v^2| \le \frac{\delta}{2}\|z_v\|^2_{L^2(\Omega)} \ \forall v \in L^2(\omega) \text{ and all } \|u - \bar u\|_{L^2(\omega)} \le \varepsilon_0.
\label{E5.9}
\end{equation}
Moreover, from Proposition \ref{P2.2} we deduce the existence of a constant $C_1 > 0$ such that
\begin{equation}
\|z_v\|_{L^2(\Omega)} = \|S'(\bar u)v\|_{L^2(\Omega)} \le C_1\|v\|_{L^2(\omega)}\ \ \forall v \in L^2(\omega).
\label{E5.10}
\end{equation}
Now, from \eqref{E2.6} we have that there exists a constant $K$ such that $\|y_u\|_{C(\bar\Omega)} \le K$ if $\|u - \bar u\|_{L^2(\omega)} \le \varepsilon_0$. From the adjoint state equation \eqref{E2.11} and \eqref{E2.3} we deduce that $\|\varphi_u\|_{C(\bar\Omega)} \le K'$ for every $\|u - \bar u\|_{L^2(\omega)} \le \varepsilon_0$ and some constant $K'$. Finally, using these estimates, \eqref{E2.4} and the expression \eqref{E2.10} we infer the existence of a constant $C_2 > 0$ such that
\begin{equation}
F''(u)v^2 \ge \gamma\|v\|^2_{L^2(\omega)} - C_2\|z_v\|^2_{L^2(\Omega)}\ \text{ for all } \|u - \bar u\|_{L^2(\omega)} \le \varepsilon_0\text{ and } \forall v \in L^2(\omega).
\label{E5.11}
\end{equation}
Now, we set
\[
\varepsilon = \min\Big\{\varepsilon_0,\frac{2\tau}{(\delta + C_2)C_1}\Big\}
\]
with $\tau$ and $\delta$ given in \eqref{E5.7}. Let $u \in \BV\cap L^2(\omega)$ such that $\|u - \bar u\|_{L^2(\omega)} \le \varepsilon$. We distinguish two cases.\vspace{2mm}

\textit{Case I: $u - \bar u \in C_{\bar u}^\tau$.} Making a Taylor expansion of $F$ around $\bar u$, using the convexity of $G$ and \eqref{E5.6}, \eqref{E5.7} and \eqref{E5.9}, we get for some $0 \le \theta \le 1$
\begin{align}
&J(u) - J(\bar u) \ge [F'(\bar u)(u - \bar u) + \alpha G'(\bar u;u - \bar u)] + \frac{1}{2}F''(\bar u+ \theta(u - \bar u))(u - \bar u)^2\notag\\
&\ge  \frac{1}{2}F''(\bar u)(u - \bar u)^2 +  \frac{1}{2}[F''(\bar u+ \theta(u - \bar u)) - F''(\bar u)](u - \bar u)^2\notag\\
& \ge \frac{\delta}{2}\|z_{u - \bar u}\|_{L^2(\Omega)}^2 - \frac{\delta}{4}\|z_{u - \bar u}\|_{L^2(\Omega)}^2 = \frac{\delta}{4}\|z_{u - \bar u}\|_{L^2(\Omega)}^2.
\label{E5.12}
\end{align}

\textit{Case II: $u - \bar u \not\in C_{\bar u}^\tau$.} This implies that
\begin{equation}
F'(\bar u)(u - \bar u) + \alpha G'(\bar u;u - \bar u) > \tau\|z_{u - \bar u}\|_{L^2(\Omega)}.
\label{E5.13}
\end{equation}
Moreover, from \eqref{E5.10} and the definition of $\varepsilon$ we infer
\[
\|z_{u - \bar u}\|_{L^2(\Omega)} \le C_1\|u - \bar u\|_{L^2(\Omega)} \le \frac{2\tau}{\delta + C_2},
\]
and therefore
\begin{equation}
\frac{\delta + C_2}{2\tau}\|z_{u - \bar u}\|_{L^2(\Omega)} \le 1.
\label{E5.14}
\end{equation}
Using again the convexity of $G$, \eqref{E5.11},  \eqref{E5.13} and \eqref{E5.14} we infer
\begin{align}
&J(u) - J(\bar u) \ge [F'(\bar u)(u - \bar u) + \alpha G'(\bar u;u - \bar u)] + \frac{1}{2}F''(\bar u+ \theta(u - \bar u))(u - \bar u)^2\notag\\
&\ge \tau\|z_{u - \bar u}\|_{L^2(\Omega)} - C_2\|z_{u - \bar u}\|_{L^2(\Omega)}^2\notag\\
& \ge \frac{\delta + C_2}{2}\|z_{u - \bar u}\|^2_{L^2(\Omega)} - \frac{C_2}{2}\|z_{u - \bar u}\|_{L^2(\Omega)}^2 = \frac{\delta}{2}\|z_{u - \bar u}\|^2_{L^2(\Omega)}.
\label{E5.15}
\end{align}

From \eqref{E5.12} and \eqref{E5.15} we deduce that \cite[page 2364]{Casas2012}
\[
J(u) - J(\bar u) \ge \frac{\delta}{4}\|z_{u - \bar u}\|_{L^2(\Omega)}^2  \ \ \forall u \in \BV \cap L^2(\omega) : \|u - \bar u\|_{L^2(\omega)} \le \varepsilon.
\]
Finally, choosing $\varepsilon$ still smaller, if necessary, we have that  \cite[page 2364]{Casas2012}
\[
\frac{1}{2}\|y_u - \bar y\|_{L^2(\Omega)} \le \|z_{u - \bar u}\|_{L^2(\Omega)}   \ \ \forall u \in \BV \cap L^2(\omega) : \|u - \bar u\|_{L^2(\omega)} \le \varepsilon.
\]
The last two inequalities imply \eqref{E5.8} with $\kappa = \frac{\delta}{8}$.
\end{proof}

We observe that \eqref{E5.7} is a sufficient second order condition for strict local optimality of $\bar u$ in the $L^2(\omega)$ sense. Moreover, by using \eqref{E5.8} we can prove stability of the optimal states with respect to perturbations in the data of the control problem. However, it does not provide information on the optimal controls. If $\gamma > 0$ we can change \eqref{E5.7} by a stronger assumption leading to a quadratic growth of the controls instead of the states; i.e. $\|y_u - \bar y\|_{L^2(\Omega)}^2$ can be replaced by $\|u - \bar u\|_{L^2(\omega)}^2$ in \eqref{E5.8}. However, if $\gamma = 0$, then this is not possible; see \cite{Casas2012}.

\begin{theorem}
Suppose that $\gamma > 0$ and let $\bar u \in \BV \cap L^2(\omega)$ satisfy the first order optimality conditions stated in Theorem \ref{T4.2} and the second order condition
\begin{equation}
\exists \delta > 0 \text{ and } \exists \tau > 0 : F''(\bar u)v^2 \ge \delta\|v\|^2_{L^2(\omega)}\ \ \forall v \in C^\tau_{\bar u}.
\label{E5.16}
\end{equation}
Then, there exist $\kappa > 0$ and $\varepsilon> 0$ such that
\begin{equation}
J(\bar u) + \frac{\kappa}{2}\|u - \bar u\|^2_{L^2(\omega)} \le J(u)\quad \forall u \in \BV \cap L^2(\omega) : \|u - \bar u\|_{L^2(\omega)} \le \varepsilon.
\label{E5.17}
\end{equation}
\label{T5.3}
\end{theorem}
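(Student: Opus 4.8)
The plan is to follow the proof of Theorem~\ref{T5.2}, but to measure the quadratic growth in the control norm $\|u-\bar u\|_{L^2(\omega)}$ instead of the state norm $\|z_{u-\bar u}\|_{L^2(\Omega)}$. The assumption $\gamma>0$ is precisely what makes this upgrade possible, because it supplies the coercive term $\gamma\|v\|^2_{L^2(\omega)}$ in the estimate \eqref{E5.11}. Throughout I would set $v=u-\bar u$ and start from the inequality obtained, as in Theorem~\ref{T5.2}, by a Taylor expansion of $F$ around $\bar u$ together with the convexity of $G$, namely $J(u)-J(\bar u)\ge F'(\bar u)v+\alpha G'(\bar u;v)+\tfrac12 F''(\bar u+\theta v)v^2$ for some $0\le\theta\le1$.

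The one genuinely new ingredient is a remainder estimate for $F''$ phrased in the control norm. Since the difference $F''(u)v^2-F''(\bar u)v^2$ involves only the state-dependent integral term of \eqref{E2.10} (the $\gamma$- and $\beta$-terms are independent of $u$ and cancel), \cite[Lemma 2.7]{Casas2012} gives, for every $\eta>0$, a radius $\varepsilon_0>0$ such that $|[F''(u)-F''(\bar u)]v^2|\le\eta\|z_v\|^2_{L^2(\Omega)}$ whenever $\|u-\bar u\|_{L^2(\omega)}\le\varepsilon_0$. Combining this with \eqref{E5.10} and choosing $\eta=\delta/(2C_1^2)$ yields the key bound $|[F''(u)-F''(\bar u)]v^2|\le\frac{\delta}{2}\|v\|^2_{L^2(\omega)}$ on a ball of radius $\varepsilon_0$. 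I would also keep \eqref{E5.11} unchanged, retaining its $\gamma\|v\|^2_{L^2(\omega)}$ term.

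I would then split into the two cases from the definition of $C^\tau_{\bar u}$. In the case $v\in C^\tau_{\bar u}$, discarding the first order part via \eqref{E5.6}, applying the second order hypothesis \eqref{E5.16} to $\tfrac12 F''(\bar u)v^2$, and controlling the remainder with the new estimate (valid for $\bar u+\theta v$ since $\theta\|v\|\le\varepsilon\le\varepsilon_0$) gives $J(u)-J(\bar u)\ge\frac{\delta}{2}\|v\|^2_{L^2(\omega)}-\frac{\delta}{4}\|v\|^2_{L^2(\omega)}=\frac{\delta}{4}\|v\|^2_{L^2(\omega)}$. In the case $v\notin C^\tau_{\bar u}$, the strict inequality \eqref{E5.13} holds; inserting \eqref{E5.11} for $\bar u+\theta v$ produces $J(u)-J(\bar u)\ge\tau\|z_v\|_{L^2(\Omega)}+\frac{\gamma}{2}\|v\|^2_{L^2(\omega)}-\frac{C_2}{2}\|z_v\|^2_{L^2(\Omega)}$, and shrinking $\varepsilon$ so that $C_1C_2\varepsilon\le2\tau$ (using \eqref{E5.10} to bound $\|z_v\|_{L^2(\Omega)}\le C_1\varepsilon$) makes $\tau\|z_v\|_{L^2(\Omega)}-\frac{C_2}{2}\|z_v\|^2_{L^2(\Omega)}=\|z_v\|_{L^2(\Omega)}\bigl(\tau-\frac{C_2}{2}\|z_v\|_{L^2(\Omega)}\bigr)\ge0$, leaving $J(u)-J(\bar u)\ge\frac{\gamma}{2}\|v\|^2_{L^2(\omega)}$. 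Taking $\varepsilon$ the smaller of the two radii and $\kappa=\min\{\delta/2,\gamma\}$ then yields \eqref{E5.17}.

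The only step requiring care compared with Theorem~\ref{T5.2} is the conversion of the remainder estimate from the state norm to the control norm; this is harmless because the constant in \cite[Lemma 2.7]{Casas2012} can be made arbitrarily small, so the factor $C_1^2$ coming from \eqref{E5.10} is absorbed. I expect no deeper obstacle, since the coercivity needed to close the second case comes directly from the $\gamma$-term in \eqref{E5.11}, which is available exactly because $\gamma>0$; this also explains why the control-norm growth \eqref{E5.17} fails in general when $\gamma=0$.
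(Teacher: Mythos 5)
Your proof is correct and follows essentially the same route as the paper: the same remainder estimate obtained from \cite[Lemma 2.7]{Casas2012} combined with \eqref{E5.10}, the same treatment of the case $u-\bar u\in C^\tau_{\bar u}$ yielding the $\frac{\delta}{4}\|u-\bar u\|^2_{L^2(\omega)}$ bound, and the same ingredients \eqref{E5.10}, \eqref{E5.11}, \eqref{E5.13} for the complementary case. The only difference is presentational: the paper handles $u-\bar u\notin C^\tau_{\bar u}$ by contradiction, extracting a sequence $u_k$ violating quadratic growth and invoking \eqref{E5.14} along that sequence, whereas you run the identical estimates directly by shrinking $\varepsilon$ so that $\tau\|z_{u-\bar u}\|_{L^2(\Omega)}-\frac{C_2}{2}\|z_{u-\bar u}\|^2_{L^2(\Omega)}\ge 0$, which is equally valid and arguably cleaner.
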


\begin{proof}
Using again \cite[Lemma 2.7]{Casas2012} along with \eqref{E5.10} we infer the existence of $\varepsilon > 0$ such that
\begin{equation}
|[F''(u) - F''(\bar u)]v^2| \le \frac{\delta}{2}\|v\|^2_{L^2(\Omega)} \ \forall v \in L^2(\omega) \text{ and all } \|u - \bar u\|_{L^2(\omega)} \le \varepsilon.
\label{E5.18}
\end{equation}
Arguing similarly to \eqref{E5.12}, but using \eqref{E5.16} and \eqref{E5.18} we obtain for every $u \in \BV \cap L^2(\omega)$ such that $\|u - \bar u\|_{L^2(\omega)} \le \varepsilon$ and $u - \bar u \in C_{\bar u}^\tau$
\begin{align}
&J(u) - J(\bar u) \ge [F'(\bar u)(u - \bar u) + \alpha G'(\bar u;u - \bar u)] + \frac{1}{2}F''(\bar u+ \theta(u - \bar u))(u - \bar u)^2\notag\\
&\ge  \frac{1}{2}F''(\bar u)(u - \bar u)^2 +  \frac{1}{2}[F''(\bar u+ \theta(u - \bar u)) - F''(\bar u)](u - \bar u)^2\notag\\
& \ge \frac{\delta}{2}\|u - \bar u\|_{L^2(\omega)}^2 - \frac{\delta}{4}\|u - \bar u\|_{L^2(\omega)}^2 = \frac{\delta}{4}\|u - \bar u\|_{L^2(\omega)}^2.
\label{E5.19}
\end{align}
Thus, \eqref{E5.17} holds with $\kappa = \frac{\delta}{2}$ assuming that $u - \bar u \in C^\tau_{\bar u}$. Now, we argue by contradiction, and we assume that there do not exist $\kappa > 0$ and $\varepsilon > 0$ such that \eqref{E5.17} holds for all the elements $u \in \BV \cap L^2(\omega)$ with $\|u - \bar u\|_{L^2(\omega)} \le \varepsilon$. This implies that for every integer $k > 0$, there exists an element $u_k \in \BV \cap L^2(\omega)$ with
\begin{equation}
\|u_k - \bar u\|_{L^2(\omega)} \le \frac{1}{k}\ \text{ and }\ J(\bar u) + \frac{1}{2k}\|u_k - \bar u\|^2_{L^2(\omega)} > J(u_k).
\label{E5.20}
\end{equation}
From \eqref{E5.19} we know that $u_k - \bar u \not\in C_{\bar u}^\tau$, hence with \eqref{E5.14}
\begin{equation}
F'(\bar u)(u_k - \bar u) + \alpha G'(\bar u;u_k - \bar u) > \tau\|z_{u_k - \bar u}\|_{L^2(\Omega)} \ge \frac{\delta + C_2}{2}\|z_{u_k - \bar u}\|^2_{L^2(\Omega)}
\label{E5.21}
\end{equation}
for every $k$ large enough. Using \eqref{E5.11}, \eqref{E5.20} and \eqref{E5.21} we obtain
\begin{align*}
&\frac{1}{2k}\|u_k - \bar u\|^2_{L^2(\omega)} > J(u_k) - J(\bar u)\\
& \ge [F'(\bar u)(u_k - \bar u) + \alpha G'(\bar u;u_k - \bar u)] + \frac{1}{2}F''(\bar u + \theta_k(u_k - \bar u))(u_k - \bar u)^2\\
&\ge \frac{\delta + C_2}{2}\|z_{u - \bar u}\|^2_{L^2(\Omega)} + \frac{\gamma}{2}\|u_k - \bar u\|^2_{L^2(\omega)} - \frac{C_2}{2}\|z_{u_k - \bar u}\|_{L^2(\Omega)}^2 \ge \frac{\gamma}{2}\|u_k - \bar u\|^2_{L^2(\omega)},
\end{align*}
with is a contradiction because $\gamma > 0$.
\end{proof}

\subsection{The use of the $|\cdot|_2$ norm}
\label{S5.2}

Given an element $v \in \BV$, we consider the Lebesgue decomposition of $\nabla v$ with respect to the positive measure $|\nabla\bar u|$: $\nabla v = h_v d|\nabla\bar u| + (\nabla v)_s$. Hence, we have
\begin{equation}
\|\nabla v\|_\mon = \int_\omega|h_v(x)|_2\, d|\nabla\bar u| + \|\nabla v)_s\|_\mo.
\label{E5.22}
\end{equation}
We also set $\nabla\bar u = \bar h |\nabla\bar u|$, where $|\bar h(x)|_2 = 1$ $|\nabla\bar u|$-a.e.~in $\omega$. Then, we have with \eqref{E3.8}
\begin{equation}
G'(\bar u;v) = \int_\omega(\bar h\cdot h_v)\, d|\nabla\bar u| + \|(\nabla v)_s\|_\mon.
\label{E5.23}
\end{equation}
Now, we define the cone of critical directions
\begin{equation}
C_{\bar u} = \{v \in \BV\cap L^2(\omega) : F'(\bar u)v + \alpha G'(\bar u;v) = 0 \text{ and } |h_v|_2 \in L^2(|\nabla\bar u|)\}.
\label{E5.24}
\end{equation}
Then, we have the following second order necessary optimality conditions.

\begin{theorem}
If $\bar u$ is a local minimum of \Pb, then
\begin{equation}
F''(\bar u)v^2 + \alpha\int_\omega\Big(|h_v(x)|_2^2 - (\bar h(x)\cdot h_v(x))^2\Big)\, d|\nabla\bar u| \ge 0 \quad \forall v \in C_{\bar u}.
\label{E5.25}
\end{equation}
\label{T5.4}
\end{theorem}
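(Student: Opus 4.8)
The plan is to establish \eqref{E5.25} by a second order Taylor expansion of $J$ along the ray $\bar u + \rho v$, for a fixed $v \in C_{\bar u}$ as $\rho \searrow 0$. Since $v \in \BV\cap L^2(\omega)$, the perturbed controls $\bar u + \rho v$ are admissible, and local optimality yields $J(\bar u + \rho v) - J(\bar u) \ge 0$ for all small $\rho > 0$. Writing $J = F + \alpha G$, the smooth part contributes its classical expansion $F(\bar u + \rho v) - F(\bar u) = \rho F'(\bar u)v + \frac{\rho^2}{2}F''(\bar u)v^2 + o(\rho^2)$ by the $C^2$ regularity of $F$ from Proposition \ref{P2.3}. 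The whole difficulty therefore lies in the non-smooth term $G$, for which I would prove the second order expansion
\[
G(\bar u + \rho v) = G(\bar u) + \rho\, G'(\bar u;v) + \frac{\rho^2}{2}\int_\omega\big(|h_v|_2^2 - (\bar h\cdot h_v)^2\big)\, d|\nabla\bar u| + o(\rho^2).
\]
Granting this, I add and subtract $\rho\,[F'(\bar u)v + \alpha G'(\bar u;v)]$, which vanishes since $v \in C_{\bar u}$ (see \eqref{E5.24}), divide the nonnegative quantity $J(\bar u+\rho v)-J(\bar u)$ by $\rho^2/2$, and let $\rho\searrow 0$ to obtain \eqref{E5.25} directly. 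Note that, unlike in Theorem \ref{T5.1}, the cone \eqref{E5.24} is defined without a closure, so no density step is required.

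To expand $G$, I would use the Lebesgue decomposition $\nabla v = h_v\,d|\nabla\bar u| + (\nabla v)_s$ together with $\nabla\bar u = \bar h\,|\nabla\bar u|$, $|\bar h|_2 = 1$. Since $\rho(\nabla v)_s$ remains the singular part of $\nabla\bar u + \rho\nabla v$ relative to $|\nabla\bar u|$, formula \eqref{E5.22} gives $G(\bar u+\rho v) = \int_\omega|\bar h + \rho h_v|_2\,d|\nabla\bar u| + \rho\|(\nabla v)_s\|_\mon$, and the singular contributions cancel against those in $\rho\,G'(\bar u;v)$ from \eqref{E5.23}. Hence
\[
G(\bar u + \rho v) - G(\bar u) - \rho\,G'(\bar u;v) = \int_\omega\big[\,|\bar h + \rho h_v|_2 - 1 - \rho(\bar h\cdot h_v)\,\big]\, d|\nabla\bar u|.
\]
Pointwise $|\nabla\bar u|$-a.e., the convex scalar function $\psi(t) := |\bar h + t h_v|_2$ satisfies $\psi(0)=1$, $\psi'(0)=\bar h\cdot h_v$, and $\psi''(0)=|h_v|_2^2 - (\bar h\cdot h_v)^2$; the latter is $h_v^\top(I - \bar h\bar h^\top)h_v$, the Hessian of $|\cdot|_2$ evaluated at the unit vector $\bar h$. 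Thus the bracketed integrand, multiplied by $2/\rho^2$, converges pointwise to $|h_v|_2^2 - (\bar h\cdot h_v)^2$, which is exactly the extra curvature term absent in the $|\cdot|_\infty$ case, where the one-dimensional modulus has vanishing curvature away from the origin.

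The hard part will be justifying the passage to the limit under the integral, since $h_v$ is only assumed to lie in $L^2(|\nabla\bar u|)$ and need not be bounded. I would mimic the splitting from Theorem \ref{T5.1}: set $\omega_\rho = \{x \in \omega : \rho|h_v(x)|_2 \le \tfrac12\}$. On $\omega_\rho$, a Lagrange remainder gives $\tfrac{2}{\rho^2}[\,|\bar h + \rho h_v|_2 - 1 - \rho(\bar h\cdot h_v)\,] = \psi''(\xi)$ for some $\xi \in (0,\rho)$; since $|\bar h + \xi h_v|_2 \ge 1 - \xi|h_v|_2 \ge \tfrac12$ there, Cauchy--Schwarz yields $\psi''(\xi) \le |h_v|_2^2/|\bar h + \xi h_v|_2 \le 2|h_v|_2^2 \in L^1(|\nabla\bar u|)$, so dominated convergence applies on $\omega_\rho$. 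On the complement, convexity of $\psi$ makes the integrand nonnegative and the triangle inequality bounds it by $2\rho|h_v|_2$; since $\rho^{-1} < 2|h_v|_2$ on $\omega\setminus\omega_\rho$, the corresponding contribution is bounded by $8\int_{\omega\setminus\omega_\rho}|h_v|_2^2\,d|\nabla\bar u|$, which tends to $0$ by absolute continuity of the integral because $|\nabla\bar u|(\omega\setminus\omega_\rho)\to 0$. Combining the two regions proves the second order expansion of $G$, and with it \eqref{E5.25}.
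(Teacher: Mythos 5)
Your proposal is correct and follows essentially the same route as the paper's own proof: the same splitting $\omega_\rho = \{x \in \omega : \rho|h_v(x)|_2 \le \tfrac12\}$, the same second-order Taylor expansion of the Euclidean norm with Lagrange remainder on $\omega_\rho$ (with the same lower bound $|\bar h + \xi h_v|_2 \ge \tfrac12$ and dominating function $2|h_v|_2^2 \in L^1(|\nabla\bar u|)$), the same Cauchy--Schwarz/Chebyshev control yielding the $8\int_{\omega\setminus\omega_\rho}|h_v|_2^2\,d|\nabla\bar u|$ bound on the complement, and the same passage to the limit by dominated convergence. The only difference is organizational: you package these estimates as an exact two-sided second-order expansion of $G$ before invoking local optimality, whereas the paper derives only the one-sided upper bound on the difference quotient that the argument requires.
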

\begin{proof}
For fixed $v \in C_{\bar u}$ and given $\rho > 0$, we define
\[
\omega_\rho = \{x \in \omega : \rho|h_v(x)|_2 \le \frac{1}{2}\}.
\]
Arguing as in the proof of Theorem \ref{T5.1} we get the following inequality analogous to \eqref{E5.4}
\begin{equation}
\sqrt{|\nabla\bar u|(\omega\setminus\omega_\rho)} \le 2\rho\Big(\int_{\omega\setminus\omega_\rho}|h_v(x)|^2_2\, d|\nabla\bar u|\Big)^{1/2}.
\label{E5.26}
\end{equation}
Using the differentiability of the $|\cdot|_2$-norm $x \in \mathbb{R}^n \to |x|_2$ for every $x \neq 0$, the fact that $|\bar h(x)|_2 = 1$ $|\nabla\bar u|$-a.e., the Schwarz inequality, and \eqref{E5.26} we get for $0 \le \theta_\rho(x) \le 1$
\begin{align*}
&\frac{G(\bar u + \rho v) - G(\bar u)}{\rho}\\
&=\int_{\omega_\rho}\frac{|\bar h + \rho h_v|_2 - |\bar h|_2}{\rho}\, d|\nabla\bar u| + \|(\nabla v)_s\|_\mon + \int_{\omega \setminus \omega_\rho}\frac{|\bar h + \rho h_v|_2 - |\bar h|_2}{\rho}\, d|\nabla\bar u|\\
&=\int_{\omega_\rho}\Big[\bar h\cdot h_v + \frac{\rho}{2}\Big(\frac{|h_v|^2_2}{|\bar h + \theta_\rho\rho h_v|_2} - \frac{(\bar h + \theta_\rho\rho h_v)\cdot h_v}{|\bar h + \theta_\rho\rho h_v|_2^3}\Big)\Big]\, d|\nabla\bar u|\\
& + \|(\nabla v)_s\|_\mon + \int_{\omega \setminus \omega_\rho}\frac{|\bar h + \rho h_v|_2 - |\bar h|_2}{\rho}\, d|\nabla\bar u|\\
&\le \int_\omega(\bar h\cdot h_v)\, d|\nabla\bar u| + \frac{\rho}{2}\int_{\omega_\rho}\Big[\frac{|h_v|^2_2}{|\bar h + \theta_\rho\rho h_v|_2} - \frac{(\bar h + \theta_\rho\rho h_v)\cdot h_v}{|\bar h + \theta_\rho\rho h_v|_2^3}\Big]\, d|\nabla\bar u|\\
&+ \|(\nabla v)_s\|_\mon + 2\int_{\omega\setminus\omega_\rho}|h_v|_2\, d|\nabla\bar u|\\
& \le \int_\omega(\bar h\cdot h_v)\, d|\nabla\bar u| +  \|(\nabla v)_s\|_\mon\\
&+\frac{\rho}{2}\Big\{\int_{\omega_\rho}\Big[\frac{|h_v|^2_2}{|\bar h + \theta_\rho\rho h_v|_2} - \frac{(\bar h + \theta_\rho\rho h_v)\cdot h_v}{|\bar h + \theta_\rho\rho h_v|_2^3}\Big]\, d|\nabla\bar u| + 8\int_{\omega\setminus\omega_\rho}|h_v|_2^2\, d|\nabla\bar u|\Big\}.
\end{align*}
Using this inequality and the local optimality of $\bar u$, we infer with $u_\rho = \bar u + \theta_\rho\rho v$, $0 \le \theta_\rho \le 1$,
\begin{align*}
&0 \le J(\bar u + \rho v) - J(\bar u) = \rho\Big[F'(\bar u)v + \alpha\frac{G(\bar u + \rho v) - G(\bar u)}{\rho}\Big] + \frac{\rho^2}{2}F''(u_\rho)v^2\\
&\le \rho[F'(\bar u)v + \alpha G'(\bar u;v)] + \frac{\rho^2 }{2}\Big\{F''(u_\rho)v^2\\
&  + \alpha\int_{\omega_\rho}\Big[\frac{|h_v|^2_2}{|\bar h + \theta_\rho\rho h_v|_2} - \frac{(\bar h + \theta_\rho\rho h_v)\cdot h_v}{|\bar h + \theta_\rho\rho h_v|_2^3}\Big]\, d|\nabla\bar u| + 8\alpha\int_{\omega\setminus\omega_\rho}|h_v|_2^2\, d|\nabla\bar u|\Big\}.
\end{align*}
Now, taking into account that $v \in C_{\bar u}$ and dividing the above inequality by $\rho^2/2$ we get
\[
0 \le F''(u_\rho)v^2 + \alpha\int_{\omega_\rho}\Big[\frac{|h_v|^2_2}{|\bar h + \theta_\rho\rho h_v|_2} - \frac{(\bar h + \theta_\rho\rho h_v)\cdot h_v}{|\bar h + \theta_\rho\rho h_v|_2^3}\Big]\, d|\nabla\bar u| + 8\alpha\int_{\omega\setminus\omega_\rho}|h_v|_2^2\, d|\nabla\bar u|.
\]
Finally, using that $|\nabla\bar u|(\omega\setminus\omega_\rho) \to 0$ as $\rho \to 0$, $|\bar h(x)|_2 = 1$, and
\[
\frac{1}{2} \le 1 - \rho|h_v(x)|_2 \le |\bar h + \theta_\rho\rho h_v|_2 \le 1 + \rho|h_v(x)|_2 \le \frac{3}{2} \ \  |\nabla\bar u|\text{-a.e.~in } \omega_\rho,
\]
we pass to the limit as $\rho \to 0$ in the above inequality with the aid of the Lebesgue dominated convergence theorem and we obtain \eqref{E5.25}.
\end{proof}

\begin{remark}
The reader can easily check that Theorems \ref{T5.2} and \ref{T5.3} also hold when the $|\cdot|_2$ norm is used. However, to reduce the gap between the necessary and sufficient conditions for optimality, we should prove that the conditions
\[
F''(\bar u)v^2 + \alpha\int_\omega\Big(|h_v(x)|_2^2 - (\bar h(x)\cdot h_v(x))^2\Big)\, d|\nabla\bar u| \ge \delta\|z_v\|^2_{L^2(\Omega)}\ \ \forall v \in C^\tau_{\bar u}
\]
and
\[
F''(\bar u)v^2 + \alpha\int_\omega\Big(|h_v(x)|_2^2 - (\bar h(x)\cdot h_v(x))^2\Big)\, d|\nabla\bar u| \ge \delta\|v\|^2_{L^2(\omega)}\ \ \forall v \in C^\tau_{\bar u}
\]
imply \eqref{E5.8} and \eqref{E5.17}, respectively. This, however, remains as a challenge.
\label{R5.1}
\end{remark}

\section{A regularization of problem \Pb}
\label{S6}\setcounter{equation}{0}

Here we briefly discuss the effect of an $H^1(\omega)$-regularization term on the first order optimality conditions. For $\epsilon >0$ we consider
\[
   \Pbe \ \ \ \min_{u \in H^1(\omega)} J_\epsilon(u) =  J(u) + \frac{\epsilon}{2} \int_\omega |\nabla u(x)|^2\, dx,
\]
subject to \eqref{E1.1}, and denote a solution by $u_\epsilon$. Let us set
\[
J_\epsilon(u) = F_\epsilon(u) + G(u),
\]
\noindent
where $F_\epsilon(u)=F(u)+ \frac{\epsilon}{2} \int_\omega |\nabla u|^2 \,dx$ for $u \in H^1(\omega)$.  We have
\[
F'_\epsilon(u)v= F'(u)v + \epsilon \int_\omega \nabla u \cdot \nabla v \, dx, \;\text{ and } \;  \partial G(u)= \nabla^* \partial g(\nabla u) \text{ for } u \in H^1(\omega),
\]
where now $\nabla : H^1(\omega) \to L^2(\omega)^n$, and $g: L^2(\omega)^n\to \mathbb{R}$ is given by $g(v)=\|v\|_{L^1(\omega)^n}$.
We have the analog of Theorem \ref{T4.2}, i.e. for every local solution $u_\epsilon$ of $\Pbe$  there exists $\lambda_\epsilon\in \partial G(\nabla u_\epsilon)$ such that
\begin{equation}\label{E6.1}
\alpha (\lambda_\epsilon, \nabla v)_{L^2(\omega)^n} + F'_{\epsilon} (u_\epsilon) v =0, \text{ for all } v \in H^1(\omega).
\end{equation}
Let us focus on $\lambda_\epsilon\in \partial g(\nabla u_\epsilon)$ next. It is equivalent to
\begin{equation}\label{E6.2}
(\lambda_\epsilon, \nabla u_\epsilon)= \|\nabla u_\epsilon\|_{L^1(\omega)^n}, \text{ and }  (\lambda_\epsilon, v) \le  \|v\|_{L^1(\omega)^n} \text{ for all } v \in L^1(\omega)^n.
\end{equation}
\noindent\textbf{The use of the Euclidean norm $|\cdot|_2$}: Here \eqref{E6.2} results in
\begin{equation}\label{E6.3}
\sum_{i=1}^n(\lambda_{\epsilon,i}, \partial_{x_i} u_\epsilon)=  \int_\omega (\sum_{i=1}^n  |\partial_{x_i} u_\epsilon|^2)^{\frac{1}{2}}\, dx, \text{ and }  \sum_{i=1}^n(\lambda_{\epsilon,i}, v_i) \le  \int_\omega (\sum_{i=1}^n  |v_i|^2)^{\frac{1}{2}}\, dx,
\end{equation}
for all $v \in L^1(\omega)^n$.
The second expression in \eqref{E6.3} implies that $\|\lambda_\epsilon\|_{L^\infty(\omega,\mathbb{R}^n)} \le 1$. Moreover, if  $\nabla u_\epsilon \neq 0$,
\begin{equation}\label{E6.4}
\|\lambda_\epsilon\|_{L^\infty(\omega,\mathbb{R}^n)} = 1 \text{ and } {\rm supp}\, \nabla u_\epsilon \subset\{x\in \omega:|\lambda_\epsilon(x)|_2 =1\}.
\end{equation}
The first claim follows from the equality in \eqref{E6.3}. This equality can also be expressed as $\int_\omega |\nabla u_\epsilon|_2 \, dx = \int_\omega (\nabla u_\epsilon \cdot \lambda_\epsilon) \, dx $, which, together with $|\lambda(x)|_2\le 1$ implies the second assertion in \eqref{E6.4}.

\noindent\textbf{The use of the  $|\cdot|_\infty$-norm}: In this case \eqref{E6.2} results in
\begin{equation}\label{E6.5}
\sum_{i=1}^n(\lambda_{\epsilon,i}, \partial_{x_i} u_\epsilon)=  \sum_{i=1}^n  \|\partial_{x_i} u_\epsilon\|_{L^1(\omega)} \; \text{ and } \;   \sum_{i=1}^n(\lambda_{\epsilon,i}, v_i) \le \sum_{i=1}^n  \|v_i\|_{L^1(\omega)},
\end{equation}
for all $v \in L^1(\omega)^n$. This implies that $\|\lambda_{\epsilon,j}\|_{L^\infty(\omega)} \le 1$ for all $j=1,\dots,n$ and if $\partial_{x_j} u_\epsilon \neq 0 $
\begin{equation}\label{E6.6}
\|\lambda_{\epsilon,j}\|_{L^\infty(\omega)}=1,\; \text{ and } \; {\rm supp}\, (\partial_{x_j} u_\epsilon)^{\pm} \subset \{x\in \omega: \lambda_{\epsilon,j}= \pm 1  \}.
\end{equation}
In fact, for any $1\le j \le n$, let $\nu_i=0$ for all $i \neq j$ and $\nu_j= \lambda_{\epsilon,j}$ on $S^+_j=\{x: \lambda_{\epsilon,j} >1\}$, and equal to 0 otherwise. Then $\int_{S^+_j} (\lambda_{\epsilon,j}^2 - \lambda_{\epsilon,j})(x) \, dx \le 0$, while the integrand is strictly positive a.e. Hence $meas( S^+_j)=0$. In an analogous form we exclude the case $\lambda_{\epsilon,j} <-1$, and hence $\|\lambda_{\epsilon,j}\|_{L^\infty(\omega)}\le 1$, for all $j$. Using the first expression in \eqref{E6.5} we have
$$
\sum_{i=1}^n \|\partial_{x_i} u_\epsilon\|_{L^1(\omega)} = \sum_{i=1}^n (\lambda_{\epsilon,i}, \partial_{x_i}u_\epsilon) \le \sum_{i=1}^n \|\partial_{x_i} u_\epsilon\|_{L^1(\omega)},
$$
which implies \eqref{E6.6}.

\noindent\textbf{Asymptotic behavior:} Finally we consider the asymptotic behavior of \eqref{E6.1}, \eqref{E6.2} as $\epsilon \to 0^+$. From the inequality $J_\varepsilon(u_\varepsilon) \le J(0)$ for all $\varepsilon > 0$, we deduce with \eqref{E1.2} the boundedness of  $\{u_\varepsilon\}_\varepsilon$ in $BV(\omega)\cap L^2(\omega)$. Moreover, \eqref{E6.4} and \eqref{E6.6} imply the boundedness of $\{\lambda_\varepsilon\}_\varepsilon$ in $L^\infty(\omega)^n$. Hence there exists $(\bar u, \bar \lambda)\in (BV(\omega)\cap L^2(\omega))\times L^{\infty}(\omega)^n $ such that on a subsequence $(u_\epsilon,  \lambda_\epsilon) \stackrel{*}{\rightharpoonup} (\bar u, \bar \lambda)$ weakly$^*$ in $(BV(\omega)\cap L^2(\omega)) \times L^{\infty}(\omega)$.
Moreover $y_{u_\epsilon} \to y_{\bar u}$ in $L^2(\Omega)$.

Now, given an arbitrary element $u \in H^1(\omega)$, the optimality of $u_\varepsilon$ and the structure of $J$ implies
\[
J(\bar u) \le \liminf_{\varepsilon \to 0}J(u_\varepsilon) \le \limsup_{\varepsilon \to 0}J(u_\varepsilon)  \le \limsup_{\varepsilon \to 0}J_\varepsilon(u_\varepsilon) \le \limsup_{\varepsilon \to 0}J_\varepsilon(u) = J(u).
\]
Since $H^1(\Omega)$ is dense in $\BV \cap L^2(\omega)$, the above inequality implies that $\bar u$ is a solution of \Pb and
\begin{equation}
J(\bar u) = \lim_{\varepsilon \to 0}J(u_\varepsilon) = \limsup_{\varepsilon \to 0}J_\varepsilon(u_\varepsilon) = \inf\Pb = J(\bar u).
\label{E6.7}
\end{equation}
This implies that $J(u_\varepsilon) \to J(\bar u)$ and $\frac{\varepsilon}{2}\int_\omega|\nabla u_\varepsilon|^2\, dx \to 0$. Moreover, from the convergence properties of $\{u_\varepsilon\}_\varepsilon$ and  $\{y_\varepsilon\}_\varepsilon$ we deduce that
\begin{align}
&\hspace{-0.2cm}\lim_{\varepsilon \to 0}\left[\frac{1}{2}\|y_{u_\varepsilon} - y_d\|^2_{L^2(\Omega)} + \frac{\beta}{2}\left(\int_\omega u_\varepsilon\, dx\right)^2\right] = \frac{1}{2}\|y_{\bar u} - y_d\|^2_{L^2(\Omega)} + \frac{\beta}{2}\left(\int_\omega \bar u\, dx\right)^2,
\label{E6.8}\\
&\hspace{-0.2cm}\int_\omega|\nabla\bar u| \le \liminf_{\varepsilon \to 0}\int_\omega|\nabla\bar u_\varepsilon|.
\label{E6.9}
\end{align}
Combining \eqref{E6.8} with the convergence $J(u_\varepsilon) \to J(\bar u)$ we infer
\begin{equation}
\lim_{\varepsilon \to 0}\left(\frac{\gamma}{2}\|u_\varepsilon\|^2_{L^2(\omega)} + \alpha\int_\omega|\nabla u_\varepsilon|\right) = \frac{\gamma}{2}\|\bar u\|^2_{L^2(\omega)} + \alpha\int_\omega|\nabla\bar u|.
\label{E6.10}
\end{equation}

If $\gamma = 0$ then this identity is reduced to $\int_\omega|\nabla u_\varepsilon| \to \int_\omega|\nabla\bar u|$. Let us prove that this convergence property also holds for $\gamma > 0$. Using \eqref{E6.10}, the convergence $u_\varepsilon \rightharpoonup \bar u$ in $L^2(\omega)$, and \eqref{E6.9} we obtain
\begin{align*}
&\frac{\gamma}{2}\|\bar u\|^2_{L^2(\omega)} \le \liminf_{\varepsilon \to 0}\|u_\varepsilon\|^2_{L^2(\omega)} \le \limsup_{\varepsilon \to 0}\|u_\varepsilon\|^2_{L^2(\omega)}\\
&\le \limsup_{\varepsilon \to 0}\left(\frac{\gamma}{2}\|u_\varepsilon\|^2_{L^2(\omega)} + \alpha\int_\omega|\nabla u_\varepsilon|\right) - \alpha\liminf_{\varepsilon \to 0}\int_\omega|\nabla\bar u_\varepsilon|\\
& \le \left(\frac{\gamma}{2}\|\bar u\|^2_{L^2(\omega)} + \alpha\int_\omega|\nabla\bar u|\right) - \alpha\int_\omega|\nabla\bar u| = \frac{\gamma}{2}\|\bar u\|^2_{L^2(\omega)}.
\end{align*}
Therefore, $\|u_\varepsilon\|_{L^2(\omega)} \to \|\bar u\|_{L^2(\omega)}$ holds. Combining this fact with the weak convergence we conclude that $u_\varepsilon \to \bar u$ strongly in $L^2(\omega)$. Inserting this in \eqref{E6.10} it follows that $\int_\omega|\nabla u_\varepsilon| \to \int_\omega|\nabla\bar u|$.

From \eqref{E6.1} we have that
$$
\alpha (\lambda_\epsilon ,\nabla v) + \int_\omega\Big(\varphi(u_\epsilon) + \gamma  u_\epsilon + \beta \int_\omega u_\epsilon \, dz\Big)v\, dx  - \epsilon \int_\omega u_\epsilon \Delta v \,dx = 0, \ \ \forall v \in C_0^\infty(\omega).
$$
Taking the limit $\epsilon \to 0$ we obtain
$$
\alpha (\bar \lambda ,\nabla v) + \int_\omega\Big(\varphi(\bar u) + \gamma  \bar u + \beta \int_\omega \bar u \, dz\Big)v\, dx  = 0, \ \ \forall v \in C_0^\infty(\omega),
$$
which corresponds to \eqref{E4.1}. Moreover, the above relation implies that $\bar \lambda \in L^2_{\div} (\omega)$, and
\begin{equation}\label{E6.11}
-\alpha \div \bar \lambda + \varphi(\bar u) + \gamma  \bar u + \beta \int_\omega \bar u \, dz =0\ \text{ in } L^2(\omega).
\end{equation}
This relation can also be deduced from \eqref{E4.1}. Thus $ \div \, \bar \lambda$
from  Section 4 coincides with  $\div\, \bar \lambda $ obtained by regularisation and it is uniquely defined by \eqref{E6.11}.

From \eqref{E6.1}, the above identity, and the established convergence $\varepsilon\int_\omega|\nabla u_\varepsilon|^2\, dx \to 0$ we find
\begin{align*}
&\lim_{\varepsilon \to 0}(\lambda_\epsilon, \nabla u_\epsilon) = - \lim_{\varepsilon \to 0}\frac{1}{\alpha} F'_\epsilon(u_\epsilon) u_\epsilon = - \lim_{\varepsilon \to 0}\frac{1}{\alpha}\Big(F'(u_\epsilon) u_\epsilon - \epsilon \int_\omega |\nabla u_\epsilon|^2 \, dx\Big)\\
& = - \frac{1}{\alpha} F'(\bar u) \bar u = -(\div \,  \bar \lambda, \bar u).
\end{align*}
Now, from \eqref{E6.2} and the convergence $\int_\omega|\nabla u_\varepsilon| \to \int_\omega|\nabla\bar u|$ we infer
\[
\lim_{\varepsilon \to 0}(\lambda_\epsilon, \nabla u_\epsilon) = \|\nabla\bar u\|_\mon.
\]
From the last two identities, and using again \eqref{E6.2} along with the convergence $\lambda_\varepsilon \stackrel{*}{\rightharpoonup} \bar\lambda$ in $L^\infty(\omega)$ we obtain
$$
(  -\div \, \bar \lambda,  \bar u)=\|\nabla \bar u \|_\mon, \text{ and }
(\bar \lambda, v) \le  |v|_{L^1(\omega)^n} \text{ for all } v \in L^1(\omega)^n.
$$
This corresponds to $
\langle  \bar\lambda, \nabla \bar u\rangle_{[\mon]^*,\mon} = \| \nabla \bar u\|_\mon,$  and
$\langle \bar\lambda,\nu\rangle \le \|\nu\|_\mon $ for all $\nu \in \mon$, which was obtained in Theorem \ref{T4.2}  with $\bar\lambda \in \partial g(\nabla\bar u)\subset [\mon]^*$.

\section{Conclusions}
An analysis for BV-regularised optimal control problems associated to
semilinear elliptic equations was provided. Existence, first  order
necessary and second order sufficient optimality conditions were
investigated.  Special attention was given to the different cases which
arise due to  the choice of a particular vector norm  in the definition
of the BV-seminorm. If \Pb is additionally regularised by an
$H^1(\omega)$-seminorm, then the set where  the gradient of the optimal
solution  vanishes, can be characterised  conveniently by an adjoint
variable, see \eqref{E6.4} and \eqref{E6.6}.
For the original problem \Pb without $H^1(\omega)$-seminorm
regularisation, such a transparent description of the set where the
measure $|\nabla \bar u|$ vanishes is not available, rather it was
replaced by the properties specified in Theorem \ref{T4.3}.

\bibliographystyle{siam}
\bibliography{Casas-Kunisch}

\begin{thebibliography}{10}

\bibitem{AFP2000}
{\sc L.~Ambrosio, N.~Fusco, and D.~Pallara}, {\em Functions of bounded
  variation and free discontinuity problems}, Oxford University Press, New
  York, 2000.

\bibitem{BrediesHoller}
{\sc K.~Bredies and M.~Holler}, {\em A pointwise characterization of the
  subdifferential of the total variation functional},  (preprint).

\bibitem{Brezis2011}
{\sc H.~Brezis}, {\em Functional analysis, {S}obolev spaces and partial
  differential equations}, Springer, New York, 2011.

\bibitem{Casas2012}
{\sc E.~Casas}, {\em Second order analysis for bang-bang control problems of
  {PDE}s}, SIAM J. Control Optim., 50 (2012), pp.~2355--2372.

\bibitem{CCK12}
{\sc E.~Casas, C.~Clason, and K.~Kunisch}, {\em Approximation of elliptic
  control problems in measure spaces with sparse solutions}, SIAM Journal on
  Control and Optimization, 50 (2012), pp.~1735--1752.

\bibitem{CCK2013}
{\sc E.~Casas, C.~Clason, and K.~Kunisch}, {\em Parabolic control problems in
  measure spaces with sparse solutions}, SIAM J. Control Optim., 51 (2013),
  pp.~28--63.

\bibitem{CHW12}
{\sc E.~Casas, R.~Herzog, and G.~Wachsmuth}, {\em Optimality conditions and
  error analysis of semilinear elliptic control problems with {$L^1$} cost
  functional}, SIAM J. Optim., 22 (2012), pp.~795--820.

\bibitem{CasasKogutLeugering}
{\sc E.~Casas, P.~Kogut, and G.~Leugering}, {\em Approximation of optimal
  control problems in the coefficient for the $p$-{L}aplace equation. {I.
  Convergence result}}, SIAM J. Control Optim., 54 (2016), pp.~1406--1422.

\bibitem{CKK2016}
{\sc E.~Casas, F.~Kruse, and K.~Kunisch}, {\em Optimal control of semilinear
  parabolic equations by {BV}-functions}, SIAM J. Control Optim., 55 (2017),
  pp.~1752--1788.

\bibitem{Casas-Kunisch2014}
{\sc E.~Casas and K.~Kunisch}, {\em Optimal control of semilinear elliptic
  equations in measure spaces}, SIAM J. Control Optim., 52 (2014),
  pp.~339--364.

\bibitem{Casas-Troltzsch2014A}
{\sc E.~Casas and F.~{Tr\"oltzsch}}, {\em Second order optimality conditions
  and their role in pde control}, Jahresber Dtsch Math-Ver, 117 (2015),
  pp.~3--44.

\bibitem{Casas-Troltzsch2015}
\leavevmode\vrule height 2pt depth -1.6pt width 23pt, {\em Second order
  optimality conditions for weak and strong local solutions of parabolic
  optimal control problems}, Vietnam J. Math., 44 (2016), pp.~181--202.

\bibitem{CKP98}
{\sc {Casas, E.}, {Kunisch, K.}, and {Pola, C.}}, {\em Some applications of bv
  functions in optimal controls and calculus of variations}, ESAIM: Proc., 4
  (1998), pp.~83--96.

\bibitem{CDP17}
{\sc A.~Chambolle, V.~Duval, G.~Peyr\'e, and C.~Poon}, {\em Geometric
  properties of solutions to the total variation denoising problem}, Inverse
  Problems, 33 (2017), pp.~015002, 44.

\bibitem{ClasonKunisch2010}
{\sc C.~Clason and K.~Kunisch}, {\em A duality-based approach to elliptic
  control problems in non-reflexive {B}anach spaces}, ESAIM: Control,
  Optimisation and Calculus of Variations, 17 (2011), pp.~243--266.

\bibitem{Ekeland-Temam74A}
{\sc I.~Ekeland and R.~Temam}, {\em Convex Analysis and Variational Problems},
  North-Holland-Elsevier, New York, 1976.

\bibitem{Gilbarg-Trudinger83}
{\sc D.~Gilbarg and N.~Trudinger}, {\em Elliptic Partial Differential Equations
  of Second Order}, Springer-Verlag, Berlin Heidelberg, 1983.

\bibitem{Giusti84}
{\sc E.~Giusti}, {\em Minimal Surfaces and Functions of Bounded Variation},
  {Birkh\"{a}user}, Boston, 1984.

\bibitem{DH17}
{\sc D.~Hafemayer}, {\em {Regularization and Discretization of a BV-controlled,
  Elliptic Problem: A Completely Adaptive Approach}}, Master's thesis,
  TU-Munich, Germany, 2017.

\bibitem{J16}
{\sc K.~Jalalzai}, {\em Some remarks on the staircasing phenomenon in total
  variation-based image denoising}, J. Math. Imaging Vision, 54 (2016),
  pp.~256--268.

\bibitem{Ring2000}
{\sc W.~Ring}, {\em Structural properties of solutions to total variation
  regularization problems}, M2AN Math. Model. Numer. Anal., 34 (2000),
  pp.~799--810.

\bibitem{ROF1991}
{\sc L.~I. Rudin, S.~Osher, and E.~Fatemi}, {\em Nonlinear total variation
  based noise removal algorithms}, Phys. D, 60 (1992), pp.~259--268.
\newblock Experimental mathematics: computational issues in nonlinear science
  (Los Alamos, NM, 1991).

\bibitem{Rudin70}
{\sc W.~Rudin}, {\em Real and Complex Analysis}, McGraw-Hill Book Co., London,
  1970.

\bibitem{Rudin73}
\leavevmode\vrule height 2pt depth -1.6pt width 23pt, {\em Functional
  Analysis}, McGraw-Hill Book Co., New York-{D\"usseldorf}-Johannesburg, 1973.

\bibitem{Stampacchia65}
{\sc G.~Stampacchia}, {\em Le probl\`{e}me de {D}irichlet pour les \'equations
  elliptiques du second ordre \`a coefficients discontinus}, Ann. Inst. Fourier
  (Grenoble), 15 (1965), pp.~189--258.

\bibitem{Troltzsch2010}
{\sc F.~{Tr\"{o}ltzsch}}, {\em Optimal Control of Partial Differential
  Equations: Theory, Methods and Applications}, vol.~112 of Graduate Studies in
  Mathematics, American Mathematical Society, Philadelphia, 2010.

\end{thebibliography}
\end{document}